\documentclass{amsart}

\usepackage{verbatim}
\usepackage{graphicx}
\usepackage{amsfonts,amsmath,latexsym,amssymb,amsthm}
\usepackage{xcolor}
\usepackage{geometry}
\usepackage{tikz,tikz-cd}
 \usepackage{mathpazo}
\usepackage[mathpazo]{flexisym}
\usepackage{breqn}
\usepackage{hyperref}

\newcounter{theoremcounter}
\newcounter{lemmacounter}

\newcounter{dummycounter}
\newcounter{propcounter}
\newcounter{corcounter}

\newcounter{quescounter}

\newcounter{emptycounter}
\newcounter{defcounter}

\newtheorem{theorem}[theoremcounter]{Theorem}

\newtheorem{question}[quescounter]{Question}

\newtheorem{lemma}[lemmacounter]{Lemma}

\newtheorem{proposition}[propcounter]{Proposition}
\newtheorem{corollary}[corcounter]{Corollary}
\newtheorem{remark}{Remark}
\newtheorem{definition}[defcounter]{Definition}

\newcounter{eqncounter}

\numberwithin{equation}{eqncounter}

\def\IR{\mathbb R}

\def\IZ{\mathbb Z}
\def\IN{\mathbb N}

\def\IQ{\mathbb Q}

\def\siz{\mathfrak{s}}

\def\disc{\mathop{\rm disc}\nolimits}
\def\Frac{\mathop{\rm frac}\nolimits}

\def\Oseen{{\mathcal{O}}}

\def\Qbar{\overline{\IQ}}

\def\Gal{\mathop{{\rm Gal}}\nolimits}
\def\ord{\mathop{{\rm ord}}\nolimits}

\title{Height lower bounds for elements of highly composite rings}

\author[S.~H.~Man]{Siu Hang Man}
\author[N.~Technau]{Niclas Technau}
\author[M.~Widmer]{Martin Widmer}
\author[P.~Yatsyna]{Pavlo Yatsyna}

\address[S.H.M., P.Y.]{Charles University, Faculty of Mathematics and Physics, Department of Algebra, Sokolov\-sk\' a 83, 186~75 Praha~8, Czech Republic}
\address[N.T.]{Department of Mathematics, University of Wisconsin--Madison, 480 Lincoln Drive,
Madison, WI 53706, USA}

\address[M.W.]{Graz University of Technology,
Institute of Analysis and Number Theory,
Steyrergasse 30/II, 8010 Graz, Austria}

\email[S.~H.~Man]{shman@karlin.mff.cuni.cz}
\email[N.~Technau]{technau@wisc.edu}
\email[M.~Widmer]{martin.widmer@tugraz.at}
\email[P.~Yatsyna]{p.yatsyna@mff.cuni.cz}
\date{\today}

\thanks{S.H.M. and P.Y. were supported by the Czech Science Foundation GAČR grant 26-20514S. S.H.M. was supported by the Charles University programme PRIMUS/25/SCI/008.
N.T. was supported by 
the University of Wisconsin–Madison, Office of 
the Vice Chancellor for Research with funding from the Wisconsin Alumni Research Foundation. During the early stages 
of this project, N.T. was supported 
by a Schr\"{o}dinger Fellowship
of the Austrian Science Fund (FWF): project J 4464-N.
P.Y. was supported by the Charles University programme PRIMUS/24/SCI/010.}

\date{\today}
\subjclass{11G50, 11R04}
\keywords{Algebraic number theory, Weil height, Northcott property, Lehmer's conjecture}

\begin{document}

\maketitle
\begin{abstract}
Let $\IQ^{(d)}$ be the composite field of all number fields of degree at most $d$. In 2001 Bombieri and Zannier proved that 
$\IQ^{(2)}$ has the Northcott property and asked what happens for $d\geq 3$. Here we study the absolute Weil height for elements in the composite ring of the rings of integers of such number fields.
In particular, we consider $\IQ^{(3)}$
as the composite field of $\IQ^{(2)}$ and a minimal infinite family of cubic fields, and we show that the composite ring of the rings of integers of these fields does have the Northcott property. Our results follow from  new 
height lower bounds, expressed in terms of the degree. 
Moreover, we introduce a notion of size for subfields of    
$\IQ^{(3)}$. For instance, $\IQ^{(3)}$ has size $1$ and the maximal abelian subfield of $\IQ^{(3)}$ has size $1/2$. We show that there is a subfield of $\IQ^{(3)}$ of size $1$ 
which has the Northcott property.
\end{abstract}

\section{Introduction}
Let $K$ be a number field. For each place $v$ of $K$ we choose the unique representative $|\cdot |_v$ that either
extends the usual archimedean absolute value or one of the usual $p$-adic absolute values on $\IQ$.
We write $K_v$ for the completion of $K$ at $v$ and we write  $d_v=[K_v:\IQ_v]$ for the local degree at $v$.
For $\alpha \in K$ we define the absolute multiplicative Weil height by
\begin{alignat}1\label{def: height}
H(\alpha)=\prod_{v}\max\left\{1,|\alpha|_v\right\}^{\frac{d_v}{[K:\IQ]}},
\end{alignat}
where the product runs over all places $v$ of $K$. Taking the $[K:\IQ]$-th root in the above definition makes the height independent  of the particular choice of the field $K$ containing $\alpha$ (see \cite[Lemma 1.5.2]{BG}). Hence 
the height defines a genuine function on the algebraic numbers $\Qbar$. We write $h(\alpha)=\log H(\alpha)$
for the logarithmic Weil height of $\alpha$. For more details
on the Weil height we refer the reader to \cite[Section 1.5]{BG}.

We are interested in bounding the height of elements of a specific subset $R\subset \Qbar$ from below in terms of their degrees. The question is of particular interest when $R$ is a subring or subfield of $\Qbar$. By Kronecker's theorem $h(\alpha)=0$  if and only if $\alpha$ is a root of unity or zero. We are excluding these cases throughout our discussion.
Lehmer's conjecture from 1933 asserts that 
\begin{alignat}1\label{ineq:Lehmer}
h(\alpha)\geq  \frac{c_0}{[\IQ(\alpha):\IQ]}
\end{alignat}  
for some absolute constant $c_0>0$. If $\IQ(\alpha)/\IQ$ is Galois,
then (\ref{ineq:Lehmer}) holds by a result
of Amoroso and David \cite{AmDa}. The latter was improved by Amoroso and Masser  \cite{AmMa} to
\begin{alignat}1\label{ineq:AmMa}
h(\alpha)\geq  \frac{c_\varepsilon}{[\IQ(\alpha):\IQ]^\varepsilon}
\end{alignat}  
for any $\varepsilon>0$.
Assuming $\IQ(\alpha)/\IQ$ is abelian, one even has an absolute lower bound   
\begin{alignat}1\label{ineq:AmDv}
h(\alpha)\geq  \frac{\log 5}{12}
\end{alignat}  
by an older result of Amoroso and Dvornicich \cite{AmDv}. 
Let $\IQ^{(d)}$ be the composite field of all number fields of degree at most $d$.
Bombieri and Zannier \cite{BoZa} 
showed that 
there exists 
a nondecreasing \emph{unbounded} function $F: \mathbb{Z}_{\geq 1}\rightarrow \IR_{\geq 0}$, (depending on $d$)
such that
\begin{alignat}1\label{ineq:BoZa}
h(\alpha)\geq  F([\IQ(\alpha):\IQ])
\end{alignat}
whenever $\IQ(\alpha)/\IQ$ is abelian and contained in $\IQ^{(d)}$.\\

We say a set $S\subset \Qbar$ has property (\ref{ineq:BoZa}) if there exists 
a nondecreasing \emph{unbounded} function $F: \mathbb{Z}_{\geq 1}\rightarrow \IR_{\geq 0}$, 
such that (\ref{ineq:BoZa}) holds for all $\alpha\in S^*$, where $S^*$ is the subset of $S$
deprived by $0$ and all roots of unity.
Property (\ref{ineq:BoZa}) is essentially a reformulation of the following important finiteness property, formally introduced 
by Bombieri and Zannier \cite{BoZa} in 2001.
\begin{definition}[Bombieri, Zannier \cite{BoZa}]
A set $S\subset \Qbar$ 
has the \emph{Northcott property} if  
$
\#\{\alpha\in S\;:\; h(\alpha)\leq X\}<\infty
$
for every $X\geq 0$. For brevity, 
we say that $S$ has (N) to express that $S$ has the Northcott property.
\end{definition}
The Northcott property has many applications (diophantine and beyond) and has been studied intensively over the last 25 years, see, e.g., \cite{WidmerEssNT} for a survey.
But its origin goes back to 1950 when Northcott (see \cite{Northcott50} and also \cite[Theorem 1.6.8]{BG}) showed that sets of uniformly bounded degree have (N). \\

Thanks to Northcott's Theorem the precise relation between property (\ref{ineq:BoZa}) and (N) is now clear. A set $S$ has (N) if and only if it has only finitely many roots of unity and has property (\ref{ineq:BoZa}). 
A set which contains infinitely many roots of unity and is closed under addition contains an infinite sequence
(without roots of unity) with uniformly bounded height\footnote{This follows from the general inequality $h(\alpha+\beta)\leq h(\alpha)+h(\beta)+\log 2$.}, hence cannot have property (\ref{ineq:BoZa}).
Consequently, for sets that are closed under addition  properties (N) and (\ref{ineq:BoZa})
are equivalent. \\

Since 
the maximal abelian subfield $\IQ^{(d)}_{ab}$ 
of $\IQ^{(d)}$ coincides with $\IQ^{(d)}$ when $d=2$ we know that $\IQ^{(2)}$ has (N).
Bombieri and Zannier asked the following question.
\begin{question}[Bombieri, Zannier, \cite{BoZa}]\label{question:BoZa}
Does a bound as in (\ref{ineq:BoZa}) hold for $\IQ^{(d)}$ when $d\geq 3$?
\end{question}
In other words, does $\IQ^{(d)}$ have (N)?
The case $d=3$ is already widely open and presents a major challenge. In this article we will focus on $d=3$, albeit some of our results are 
more general.

We think of $\IQ^{(3)}$ as the compositum of $L_0=\IQ^{(2)}$ and an infinite sequence of cubic fields $K_1,K_2,K_3,\ldots$.
We can assume that the compositum $L_n=L_{n-1}K_n$ is a proper extension of $L_{n-1}$ for every $n$. We are unable to prove that the elements of 
$$\IQ^{(3)}=L_0\prod_n K_n$$ 
satisfy the required height lower bound but we show that such a height lower bound exists for the composite ring of integers of these fields, i.e., for 
\begin{alignat}1\label{ex:ring}
\Oseen_{L_0}\prod_{n=1}^\infty \Oseen_{K_n}.
\end{alignat}
The above claim follows from Corollary \ref{cor: generalheightdegreebound} (see paragraph after Corollary  \ref{cor: generalheightdegreebound}).
  
The following notation  and definitions are assumed throughout this article.
\begin{definition}\label{def notation}
\noindent \begin{itemize}
\item[(a)] For a field $K\subseteq\overline{\mathbb{Q}}$, let $\mathcal{O}_{K}$
be its ring of integers. For a family $S_i$ ($i\in \mathcal{I}\subset \IN$) of subrings of $\Qbar$, we write 
$$
\prod_{i\in \mathcal{I}}S_i
$$ for the composite ring, i.e., the smallest subring of $\Qbar$ containing  $S_i$  for each $i\in \mathcal{I}$. 
\item[(b)] $L_0$ denotes a subfield of $\Qbar$, and $(K_i)_{i}$ is a sequence of number fields of respective degrees $d_i=[K_i:\IQ]>1$.
We write 
\begin{alignat}1\label{def: RnLn}
L_{n}=L_0\prod_{i=1}^nK_i,\quad
R_n=\Oseen_{L_0}\prod_{i=1}^n \Oseen_{K_i}
\end{alignat}
for the composite rings. Note that the former is a field.\footnote{This follows from the fact that if $K\subset \Qbar$ is a field and $\beta_1,\ldots,\beta_n\in \Qbar$ then
$K[\beta_1,\ldots,\beta_n]=K(\beta_1,\ldots,\beta_n)$.}
We denote the composite ring of all $R_n$ by
$$R=\bigcup_n R_n.$$
\item[(c)] We say the family
$L_0, K_1,K_2,K_3,\ldots$ is 
linearly disjoint over $\IQ$ if every finite subfamily is linearly disjoint 
over $\IQ$.
\end{itemize}
\end{definition}
\begin{remark}\label{rem:ldjn}
The linear disjointness of
$L_0, K_1,K_2,K_3,\ldots$ over $\IQ$
is equivalent to 
$[L_n:L_{n-1}]=[K_n:\IQ]$ 
for every $n\in \IN$ (see \cite{12} discussion before Lemma 2.5.6).
\end{remark}
It is slightly more convenient to prove and express the results in terms of the multiplicative height $H(\cdot)$ defined in (\ref{def: height}).
\begin{theorem}\label{thm: heightdegreebound}
Suppose the family
$L_0, K_1,K_2,K_3,\ldots$ is linearly disjoint over $\IQ$, and all degrees  $d_i$ are prime and bounded from above by some $d\in \IN$.
Then there exists $c_d>0$, depending only on $d$, such that for every $\alpha\in R$
\begin{alignat}1\label{ineq: heightdegreebound}
H(\alpha)\geq
c_d \left(\log[L_0(\alpha):L_0]\right)^{\frac{1}{d^3}}.
\end{alignat}
\end{theorem}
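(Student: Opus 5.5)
The plan is to reduce the height lower bound to a lower bound on the discriminant of one of the cubic-type fields $K_n$ that $\alpha$ actually ``uses'', and then to use the finiteness of number fields of bounded degree and bounded discriminant. Fix $\alpha\in R$, say $\alpha\in R_N$, and let $I\subseteq\{1,\dots,N\}$ be a minimal set of indices such that $\alpha\in \Oseen_{L_0}\prod_{i\in I}\Oseen_{K_i}$. By Remark~\ref{rem:ldjn} and linear disjointness, $[L_0(\alpha):L_0]\le \prod_{i\in I} d_i\le d^{|I|}$, so $|I|\ge \log[L_0(\alpha):L_0]/\log d$. It therefore suffices to prove
\[
H(\alpha)\ge c_d\,|I|^{1/d^3}.
\]
Since linear disjointness is symmetric in the family, we may reorder $I$ freely. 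By a Schmidt-type counting bound, the number of number fields of degree at most $d$ with $|\disc|\le X$ is $\ll_d X^{b_d}$ with $b_d=O(d)$. Hence among the $|I|$ distinct fields $K_i$, $i\in I$, some $K_n$ satisfies $|\disc(K_n)|\gg_d |I|^{1/b_d}$. We reorder so that this $n$ is the last index, and write $L'=L_0\prod_{i\in I\setminus\{n\}}K_i$.

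The core step is to bound $|\disc(K_n)|$ from above by a power of $H(\alpha)$. Since $\Oseen_{K_n}$ has an integral basis $\omega_1,\dots,\omega_{d_n}$, we can write $\alpha=\sum_j\beta_j\omega_j$ with $\beta_j\in R'=\Oseen_{L_0}\prod_{i\in I\setminus\{n\}}\Oseen_{K_i}$. By minimality of $I$ we have $\alpha\notin L'$. Because $d_n=[L'K_n:L']$ is prime, $L'(\alpha)=L'K_n$, and $\alpha$ has $d_n$ distinct conjugates $\sigma_1\alpha,\dots,\sigma_{d_n}\alpha$ over $L'$.

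The element
\[
\delta=\prod_{k<l}(\sigma_k\alpha-\sigma_l\alpha)^2\in\Oseen_{L'}
\]
is a nonzero algebraic integer. Its height is at most $(2H(\alpha))^{2d_n^2}$, and, being a nonzero integer, it has $H(\delta)\ge 1$. The key arithmetic input is that $\delta$ lies in the relative discriminant ideal of $L'K_n/L'$. I want to show that the absolute norm of this ideal, normalised by $[L':\IQ]$, is at least $|\disc(K_n)|^{c/d}$, at least after discarding the part of $\disc(K_n)$ coming from primes ramified in the fields $K_i$ with smaller discriminant.

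Combining this with the product formula gives
\[
\prod_{v}\max\{1,|\delta|_v\}^{d_v/[L':\IQ]}\ \ge\ \text{(normalised norm of }\delta\text{)}^{-1}\cdots,
\]
which I expect to yield $|\disc(K_n)|^{1/(\text{const}\cdot d)}\le (2H(\alpha))^{2d^2}$, that is, $H(\alpha)\gg_d|\disc(K_n)|^{1/O(d^3)}$. Combining this with the counting step gives the claimed exponent $1/d^3$, after tracking the constants.

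I expect the main obstacle to be this discriminant comparison. The issue is that ramification of $K_n$ may be absorbed by $L'$, so the relative discriminant of $L'K_n/L'$ need not see $\disc(K_n)$. My first attempt will use primality of $d_n$. At a prime $p$ ramified in $K_n$, either $p$ is totally or wildly ramified with index divisible by a prime $\le d$, or the inertia structure forces tame ramification of index $d_n$ that $L'$ can only absorb if some $K_i$, $i\ne n$, is ramified at $p$ in a compatible way.

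To handle the second case, I would refine the choice of $n$ greedily. Instead of taking the single largest discriminant, I would take an index whose discriminant has a large part $\disc^*(K_n)$ coprime to the ramification in the other chosen fields. I would then apply the counting bound to the $\disc^*$, passing to a sub-collection of size $\gg |I|^{c}$, and this is where the extra powers of $d$ in the exponent are lost. If this fails, the fallback is to bound $H(\alpha)$ below directly by $\max_j H(\beta_j)$-type estimates through inverting the conjugate matrix $(\sigma_k\omega_j)$, whose determinant squared is $\disc(K_n)$, and to induct on $|I|$.
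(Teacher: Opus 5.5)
\textbf{There is a genuine gap} at exactly the step you call the core one, and the route you propose for it cannot work.

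Bounding $H(\delta)$ from below through the normalised norm of the relative discriminant ideal of $L'K_n/L'$ is essentially Silverman's bound (\ref{ineq: Silverman}). That ideal can be the unit ideal. Your claim that $L'$ can absorb the ramification of $K_n$ only if some other $K_i$ is ramified compatibly is false, because $L'\supseteq L_0$ and $L_0$ is arbitrary.

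In the main case of interest, $L_0=\IQ^{(2)}$ with cubic $K_i$, take $K_n$ non-Galois with fundamental discriminant $D$ (e.g.\ $K_n=\IQ(\vartheta)$ with $\vartheta^3-\vartheta-1=0$, $D=-23$). Then $K_n(\sqrt{D})/\IQ(\sqrt{D})$ is unramified, hence so is $L'K_n/L'$ for every choice of $L'$. If all $K_i$ are of this type, every relative discriminant ideal you could use is trivial. So neither a greedy choice of $n$ nor passing to a coprime part $\disc^*$ helps. Your fallback of inverting $(\sigma_k\omega_j)$ yields upper bounds for the heights of the coordinates $\beta_j$, not a lower bound for $H(\alpha)$, and the induction on $|I|$ is not specified.

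\textbf{The missing idea} is that $\alpha$ lies in the non-maximal order $R'\Oseen_{K_n}=\bigoplus_j R'\omega_j$. The discriminant of any generator from this order is divisible by $\Delta_{K_n}$ itself, regardless of ramification in $L'K_n/L'$.

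\begin{itemize}
\item Write $\alpha^i=\sum_j c_{ji}\omega_j$ with $c_{ji}\in R'$. Since the $\sigma_k$ fix $R'$, you get $(\sigma_k(\alpha^i))_{k,i}=(\sigma_k(\omega_j))_{k,j}\,C$.
\item By linear disjointness the restrictions $\sigma_k|_{K_n}$ are all the embeddings of $K_n$. Hence $\delta=\Delta_{K_n}(\det C)^2$ with $\det C$ a nonzero algebraic integer.
\item Computing in any number field $M$ containing $\delta$ and $\det C$, you get $H(\delta)=H(\delta^{-1})\geq\prod_{v\nmid\infty}|\delta|_v^{-d_v/[M:\IQ]}\geq|\Delta_{K_n}|$.
\item Combined with $H(\delta)\leq(\sqrt{2}\,H(\alpha))^{2d_n(d_n-1)}$, this gives $H(\alpha)\gg|\Delta_{K_n}|^{1/(2d_n(d_n-1))}$. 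Together with your counting step, this proves the theorem.
\end{itemize}

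The paper obtains the same divisibility prime by prime for $p\geq d$. It uses a power basis $\IZ[\theta]$ with $p\nmid[\Oseen_{K_n}:\IZ[\theta]]$, supplied by von \.{Z}yli\'{n}ski's Theorem~\ref{thm: Zylinski} (see Lemma~\ref{lem:heightest} and Proposition~\ref{prop: heightbound}). It also tracks the denominators of Lemma~\ref{lem:reduct} instead of asserting $\alpha\notin L'$.

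A smaller point: your claim that minimality of $I$ gives $\alpha\notin L'$ needs an argument, since minimality only gives $\alpha\notin R'$. It does follow by writing $1=\sum_j a_j\omega_j$ with $\gcd(a_j)=1$ and comparing coordinates over $L'$.
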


Apart from the choice of $L_0$ and $d$ the height lower bound (\ref{ineq: heightdegreebound}) is completely independent of the particular choice of the ring $R$.
For each $L_0$ and $d>1$ let us define $\mathcal{R}_{L_0}(d)$ to be the union of all rings $R$ as in
Definition \ref{def notation} (b) that are satisfying the hypothesis of Theorem \ref{thm: heightdegreebound}. Hence, the height bound (\ref{ineq: heightdegreebound}) remains valid for every element $\alpha\in \mathcal{R}_{L_0}(d)$.
Unfortunately, $\mathcal{R}_{L_0}(d)$ need not be a ring in general. 

If the Galois closures of the fields  $L_0, K_1,K_2,K_3,\ldots$ are also linearly disjoint over $\IQ$ then the condition that the degrees $d_i$ be prime in Theorem \ref{thm: heightdegreebound} can be replaced by the weaker constraint $L_i/L_{i-1}$ has no proper intermediate field. We will explain this more thoroughly in Section \ref{sec: thmghdb}.

In the spirit of Lehmer one can ask what is the correct dependence on the degree $[L_0(\alpha_n):L_0]$ of the lower bound (\ref{ineq: heightdegreebound})?
Here is a simple upper bound
for $L_0=\IQ$ and $K_i=\IQ(p_i^{1/d})$, where $d$ is a prime and $p_i$ is the $i$-th prime. Taking $\alpha_n=\sum_{i=1}^n p_i^{1/d}\in R$, and estimating its height by the maximal modulus of the conjugates gives
$H(\alpha_n)\leq n(n\log n)^{1/d}\leq n^{1.01+1/d}$ (assuming $n$ is big enough). As the degree of $\alpha_n$ is given by $d^n$ we 
conclude 
\begin{alignat}1\label{ineq: uppbound}
H(\alpha_n)\leq (\log [L_0(\alpha_n):L_0])^{1.01+1/d}.
\end{alignat} 
So here the correct exponent lies between
$1/d^3$ and $1.01+1/d$.
Using that for most conjugates of $\alpha_n$ there is much cancellation one can easily reduce the exponent ${1.01+1/d}$ in (\ref{ineq: uppbound}). 
And it is also clear from the proof of Theorem \ref{thm: heightdegreebound} that the exponent $1/d^3$ is far from sharp and can easily be improved.
However, finding the sharp exponent of the lower bound in (\ref{ineq: heightdegreebound}) seems a rather challenging problem, even when $L_0=\IQ$ and $d=2$. \\

Theorem \ref{thm: heightdegreebound} is an immediate consequence of the following result.
Recall that we are assuming $d_i>1$ for all $i\in \IN$ throughout.
\begin{theorem}\label{thm: heightdiscbound}
Suppose the family
$L_0, K_1,K_2,K_3,\ldots$ is linearly disjoint over $\IQ$ and that there exists  $d>1$
so that $d_i=[K_i: \IQ] \leq d$ for all $i\in \IN$.  
If $\alpha\in R$ and $L_{m-1}(\alpha)=L_{m}$, then
\begin{alignat}1\label{ineq:thmmain}
H(\alpha)\geq
c_d |\Delta_{K_m}|^{\frac{1}{2d_m(d_m-1)}}
\end{alignat}
for some $c_d>0$ depending only on $d$.
\end{theorem}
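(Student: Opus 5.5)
The plan is to produce a nonzero algebraic integer, attached to $\alpha$, that is divisible by $\Delta_{K_m}$ and whose height is controlled by $H(\alpha)$, and then compare its norm with its height.

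\emph{Reduction to finite level.} Since $\alpha\in R$, we have $\alpha\in R_N$ for some $N\geq m$. Only finitely many elements of $\Oseen_{L_0}$ occur in an expression of $\alpha$, so we may replace $L_0$ by a number field $L_0'\subseteq L_0$ containing them. Linear disjointness of $L_0',K_1,\dots,K_N$ is inherited. Let
\[
L'=L_0'\prod_{i\leq N,\, i\neq m}K_i, \qquad A=\Oseen_{L'}\,\Oseen_{K_m},
\]
and let $\theta_1,\dots,\theta_{d_m}$ be an integral basis of $\Oseen_{K_m}$.

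\emph{Structure of $\alpha$.} Expanding $\alpha$ as a sum of products shows $\alpha=\sum_j\gamma_j\theta_j$ with $\gamma_j\in\Oseen_{L'}$, so $\alpha\in A$. By linear disjointness, $[L'K_m:L']=d_m$, and the $\theta_j$ are an $L'$-basis of $L'K_m$. Hence $A$ is a free $\Oseen_{L'}$-module with basis $\theta_1,\dots,\theta_{d_m}$ and discriminant $\Delta_{K_m}$. Moreover $K_m\subseteq L_m=L_{m-1}(\alpha)\subseteq L'(\alpha)$, so $L'(\alpha)=L'K_m$ and $\alpha$ has degree exactly $d_m$ over $L'$. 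Let $\tau_1,\dots,\tau_{d_m}$ be the embeddings of $L'K_m$ over $L'$ and set
\[
\delta=\prod_{k<l}(\tau_k\alpha-\tau_l\alpha)^2 .
\]
This is the discriminant of $1,\alpha,\dots,\alpha^{d_m-1}$ over $L'$, so $\delta\in\Oseen_{L'}$ and $\delta\neq0$. Since $\alpha\in A$, each power $\alpha^{k}$ lies in $A$, so we can write $(\alpha^{k})_k=C(\theta_j)_j$ with $C$ a $d_m\times d_m$ matrix over $\Oseen_{L'}$. Then $\delta=\det(C)^2\Delta_{K_m}$, i.e. $\delta\in\Delta_{K_m}\Oseen_{L'}\setminus\{0\}$.

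\emph{Norm versus height.} Put $D=[L':\IQ]$. Since $\delta/\Delta_{K_m}$ is a nonzero algebraic integer,
\[
|N_{L'/\IQ}(\delta)|\geq|\Delta_{K_m}|^{D}.
\]
On the other hand, for an algebraic integer, $|N_{L'/\IQ}(\delta)|\leq H(\delta)^{D}$, giving $H(\delta)\geq|\Delta_{K_m}|$. Now $\delta$ is a product of $d_m(d_m-1)$ factors of the form $\tau_k\alpha-\tau_l\alpha$. Conjugates have the same height, so $H(\tau_k\alpha-\tau_l\alpha)\leq 2H(\alpha)^2$, and submultiplicativity gives
\[
H(\delta)\leq(2H(\alpha)^2)^{d_m(d_m-1)}.
\]
Combining, $H(\alpha)\geq 2^{-1/2}|\Delta_{K_m}|^{1/(2d_m(d_m-1))}$, which is (\ref{ineq:thmmain}) with an absolute constant (in particular one depending only on $d$).

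\emph{Main obstacle.} The delicate step is the structural claim that $\alpha$ lies in the free $\Oseen_{L'}$-module $A$ with basis $\theta_j$, together with the resulting integrality relation $\delta\in\Delta_{K_m}\Oseen_{L'}$. Both rest on linear disjointness, which gives $[L'K_m:L']=d_m$. Also needed, and easy to overlook, is that $\alpha$ generates $L'K_m$ over the larger field $L'$, not merely over $L_{m-1}$. I would check both points carefully; the height estimates are routine.
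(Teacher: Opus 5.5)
Your argument is correct in substance and takes a genuinely different, and simpler, route than the paper. There is one mis-justified step, which is easy to repair (see the second paragraph).

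\textbf{Comparison with the paper.} The paper works with power bases $\IZ[\theta_i]$. Descending from $R_n$ to $R_{m-1}$ (Lemma~\ref{lem:reduct}) introduces the index denominator $I=I_m\cdots I_n$. As a result, the paper only obtains $\Delta_{K_m}\mid\Delta$ locally at primes $p\nmid I$. Von \.{Z}yli\'{n}ski's theorem lets it choose the $\theta_i$ with $p\nmid I$ for each $p\geq d$, and the primes $p<d$ are absorbed using $\ord_p(\Delta_{K_m})\leq C_d$. This is exactly where the hypothesis $d_i\leq d$ and the $d$-dependence of $c_d$ enter.

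You instead adjoin $K_m$ last over $L'$, which is legitimate because linear disjointness is symmetric. You then use an integral basis of $\Oseen_{K_m}$, which is simultaneously an $\Oseen_{L'}$-basis of $\Oseen_{L'}\Oseen_{K_m}$ with discriminant $\Delta_{K_m}$. This is the relative analogue of $\disc(\alpha)=[\Oseen_K:\IZ[\alpha]]^2\Delta_K$. It gives $\Delta_{K_m}\mid\delta$ globally, with no index, no local analysis and no von \.{Z}yli\'{n}ski.

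What your route buys is an absolute constant $2^{-1/2}$. It also shows that the hypothesis $d_i\leq d$ is not needed for this theorem at all; it is still needed for the deductions of Theorem~\ref{thm: heightdegreebound} and Corollary~\ref{cor: generalheightdegreebound}. The paper's local route is essentially forced by its choice of monogenic orders $\IZ[\theta_m]$, which is natural for its comparison with Mahler and Dixit--Kala. It gains nothing for this statement.

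\textbf{The step to repair.} After you replace $L_0$ by the number field $L_0'$, you no longer have $L_{m-1}\subseteq L'$; for instance $L_0=\IQ^{(2)}$ is infinite. So the chain $K_m\subseteq L_{m-1}(\alpha)\subseteq L'(\alpha)$ is unjustified as written. The conclusion $L'(\alpha)=L'K_m$ is still true, and follows by a degree argument:
\begin{itemize}
\item Put $M=L_0\prod_{i\leq N,\,i\neq m}K_i$. This field contains both $L_{m-1}$ and $L'$.
\item Then $M(\alpha)\supseteq L_{m-1}(\alpha)=L_m\supseteq K_m$. Since $\alpha\in L'K_m\subseteq MK_m$, you get $M(\alpha)=MK_m$.
\item By linear disjointness of $L_0,K_1,\ldots,K_N$, the field $MK_m$ has degree $d_m$ over $M$.
\item Hence $[L'(\alpha):L']\geq[M(\alpha):M]=d_m=[L'K_m:L']$, and therefore $L'(\alpha)=L'K_m$.
\end{itemize}
Alternatively, skip the reduction to $L_0'$ entirely. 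Run the same module argument over $\Oseen_M$, and compute the norm of $\delta$ in any number field containing it. This still gives $H(\delta)\geq|\Delta_{K_m}|$, since $\delta/\Delta_{K_m}$ is a nonzero algebraic integer.
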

Before we proceed, let us deduce Theorem \ref{thm: heightdegreebound} from Theorem \ref{thm: heightdiscbound}.
\begin{proof}[Proof of Theorem \ref{thm: heightdegreebound}
assuming Theorem \ref{thm: heightdiscbound}]
First we reorder the fields $K_i$ such that the discriminants $|\Delta_{K_i}|$ are increasing. Clearly, this does not affect
the linear disjointness of the family
$L_0, K_1,K_2,K_3,\ldots$ but we get a new sequence of fields $(L_i)_i$. Suppose $\alpha\in L_m\setminus L_{m-1}$. The linear disjointness gives $[L_m:L_{m-1}]=[K_m:\IQ]=d_m$. Since $d_m$ is 
assumed to be prime, we conclude that $L_{m-1}(\alpha)=L_m$. Further, $[L_0(\alpha):L_0]\leq [L_m:L_{0}]\leq d^m$. Using the very crude bound that among $m$
number fields of degree $\leq d$ there must be at least one with discriminant $\gg_d m^{2/d}$ we conclude 
\begin{alignat*}1
H(\alpha)\gg_d m^{\frac{2}{2d_m(d_m-1)d}} \gg_d \left(\log[L_0(\alpha):L_0]\right)^{\frac{1}{d^3}},
\end{alignat*}
as required.
\end{proof}

To compare (\ref{ineq:thmmain}) with bounds from the literature let us assume the ground field $L_0$ is a number field.
Mahler's classical bound applies to any algebraic integer $\alpha$. Let $D$ be its degree and $f_{\alpha/\IQ}$ its minimal polynomial over $\IQ$, then    
\begin{alignat*}1
H(\alpha)\geq
D^{-1/D} |\disc(f_{\alpha/\IQ})|^{\frac{1}{2D(D-1)}}.
\end{alignat*}
Dixit and Kala \cite[Section 3.1]{DixitKala26} have generalized Mahler's inequality from $\IQ$ to arbitrary ground fields.
To relate their bound with (\ref{ineq:thmmain}) let us briefly sketch the idea behind the proof of Theorem \ref{thm: heightdiscbound}. Let $\alpha\in R$ and $L_{m-1}(\alpha)=L_{m}$. For simplicity let us additionally assume $\alpha\in R_m$.
First we express the discriminant of $f_{\alpha/L_{m-1}}$ in terms of the discriminant of $K_m$. 
Choosing an integral generator $\theta_m$
of $K_m$ one can show\footnote{Apply Lemma \ref{lem:heightest} and use that $R_m\subset (1/I_m)R_{m-1}[\theta_m]$.} 
\begin{align}\label{ineq:discf}
\disc(f_{\alpha/L_{m-1}})=\Delta_{\IZ[\theta_m]}\frac{B^2}{I_{m}^{d_m(d_m-1)}}=\Delta_{K_m}\frac{B^2}{I_{m}^{d_m(d_m-1)-2}}
\end{align}
 where $I_m$ is the index $[\Oseen_{K_m}:\IZ[\theta_m]]$ and $B$ is some non-zero algebraic integer over which we have little control.
Now Dixit and Kala's relative Mahler measure inequality \cite[(6)]{DixitKala26} gives 
 \begin{equation}\label{localglobal}
    H(\alpha) \geq   d_{m}^{-\frac{1}{{d_m}}}|N_{L_{m-1} / \IQ} (\disc(f_{\alpha/L_{m-1}}))|^{\frac{1}{2[L_{m-1} : \IQ]d_m(d_m-1)}}.
\end{equation}
For our bound (\ref{ineq:thmmain}) the contribution of $B^2$ to the height has been ignored. Doing the same here  and 
combining (\ref{ineq:discf}) and (\ref{localglobal}) gives
 \begin{equation*}
    H(\alpha) \geq   d_{m}^{-\frac{1}{{d_m}}} \left(\frac{\Delta_{K_m}}{I_{m}^{d_m(d_m-1)-2}}\right)^{\frac{1}{2d_m(d_m-1)}}.
\end{equation*}
One can bound the index $I_m$ from above in terms of the discriminant $\Delta_{K_m}$, e.g., \cite{ThMiHl} yields $I_m\leq 2^{d_m^3}\Delta_{K_m}^{(d_m-2)/4}$
(assuming $K_m$ has no proper subfields). But for $d_m\geq 3$ this results in a trivial lower bound.\\

To prove Theorem \ref{thm: heightdiscbound} a more refined analysis via local heights is needed. For each rational prime $p$ all contributions to the height above $p$ are carefully estimated. Instead of choosing one fixed $\theta_m$ for every $K_m$ we need to pick a suitable generator $\theta_m=\theta_m(p)$ for every prime $p$.
Classical results on common inessential discriminant divisors, e.g., 
a theorem due to von \.{Z}yli\'{n}ski
(see Theorem \ref{thm: Zylinski}), permit us to choose $\theta_m(p)$ with the required properties, at least when $p$ is sufficiently large in terms of $d_m$. Additional difficulties arise from the fact that we cannot assume $R_m=R\cap L_m$  
which means we need to control the degrees $d_i$ for $i\geq m$. 
The latter explains why we want the degrees $d_i$ to be uniformly bounded.

Theorem \ref{thm: heightdiscbound} is also related to Silverman's bound \cite[Theorem 2]{Silverman}
 \begin{equation}\label{ineq: Silverman}
    H(\alpha) \geq   d_m^{-\frac{1}{d_m}}(N_{L_{m-1} / \IQ} (\Delta_{L_m/L_{m-1}}))^{\frac{1}{2[L_{m-1} : \IQ]d_m(d_m-1)}}.
\end{equation}
The bound (\ref{ineq: Silverman}) applies for every $\alpha\in L_m$ with $L_{m-1}(\alpha)=L_m$.
If $\Delta_{K_m}$ and $\Delta_{L_{m-1}}$ are coprime then Silverman's (\ref{ineq: Silverman}) recovers our (\ref{ineq:thmmain}) but if 
$L_m/L_{m-1}$ is unramified then (\ref{ineq: Silverman}) gives nothing at all. 
To establish height lower bounds for elements in an infinite field tower $L=\bigcup_nL_n$ the inequality
(\ref{ineq: Silverman}) is therefore only useful when the field tower is sufficiently ramified at each step. Theorem \ref{thm: heightdiscbound} allows us to get rid of this ramification condition at the expense of replacing the field $L$ by the ring $R$.\\

A slight generalization of Northcott's Theorem shows that sets of uniformly bounded 
degree over any field with Northcott property also have Northcott property (see \cite[Theorem 2.1]{DvZa}).
Using this fact in conjunction with Hermite's Theorem we immediately deduce the following corollary from Theorem \ref{thm: heightdiscbound}.

\begin{corollary}\label{cor: generalheightdegreebound}
Suppose $L_0$ has the Northcott property, the family
$L_0, K_1,K_2,K_3,\ldots$ is linearly disjoint over $\IQ$, and the degrees  $d_i$ are uniformly bounded from above.
Moreover, suppose there is no proper intermediate field between $L_{n-1}$ and $L_n$ for every $n\in \IN$.
Then $R$ also has the Northcott property.
\end{corollary}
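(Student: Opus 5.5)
Fix $X\geq 1$; we show that $\{\alpha\in R: H(\alpha)\leq X\}$ is finite. Since $R=\bigcup_n R_n$, every $\alpha\in R$ lies in some $L_m$. Let $m=m(\alpha)\geq 0$ be the least index with $\alpha\in L_m$. The argument splits according to whether $m=0$.

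If $m=0$, then $\alpha\in L_0$. Since $L_0$ has (N), only finitely many such $\alpha$ satisfy $H(\alpha)\leq X$.

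If $m\geq 1$, then $\alpha\in L_m\setminus L_{m-1}$, so $L_{m-1}\subsetneq L_{m-1}(\alpha)\subseteq L_m$. By the hypothesis that there is no proper intermediate field, $L_{m-1}(\alpha)=L_m$. Let $d$ be a uniform bound for the $d_i$. Theorem \ref{thm: heightdiscbound} then gives
\[
X\geq H(\alpha)\geq c_d|\Delta_{K_m}|^{\frac{1}{2d_m(d_m-1)}}\geq c_d|\Delta_{K_m}|^{\frac{1}{2d(d-1)}},
\]
using $|\Delta_{K_m}|\geq 1$. Hence $|\Delta_{K_m}|\leq (X/c_d)^{2d(d-1)}$. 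By Hermite's Theorem there are only finitely many number fields of degree at most $d$ with discriminant bounded by this quantity. The $K_i$ are pairwise distinct, because $L_0,K_1,K_2,\ldots$ is linearly disjoint with all $d_i>1$. So only finitely many indices $m$ can occur, say $m\leq M(X)$.

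Consequently every $\alpha$ with $H(\alpha)\leq X$ lies in $L_{M}$ with $M=M(X)$. Linear disjointness gives $[L_M:L_0]=d_1\cdots d_M\leq d^M$, so each such $\alpha$ has degree at most $d^M$ over $L_0$. The generalization of Northcott's Theorem \cite[Theorem 2.1]{DvZa} says that elements of uniformly bounded degree over a field with (N) form a set with (N). Therefore only finitely many such $\alpha$ have $H(\alpha)\leq X$, which proves the corollary.

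There is no real obstacle, since all the work is done by Theorem \ref{thm: heightdiscbound}. The only points needing care are the reduction $L_{m-1}(\alpha)=L_m$, which is exactly what the no-intermediate-field hypothesis provides, and the fact that the $K_m$ are distinct, so that Hermite's Theorem bounds the index $m$ and not merely the set of fields.
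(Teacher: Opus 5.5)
Your proposal is correct and is essentially the paper's argument: Theorem~\ref{thm: heightdiscbound} together with Hermite's Theorem bounds the index $m$ for elements of bounded height, and the Dvornicich--Zannier relative Northcott theorem over $L_0$ then gives finiteness. You also correctly supply the details the paper leaves implicit, namely that the no-intermediate-field hypothesis gives $L_{m-1}(\alpha)=L_m$ and that the $K_i$ are pairwise distinct, so bounding the fields bounds the indices.
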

If the fields $K_i$ are all cubic and $\IQ^{(2)}\subseteq L_0$ then it suffices\footnote{We can replace $L_0$ by some $L_N$ to assume $L_n\neq L_{n-1}$ for all $n$. 
Since $\IQ^{(2)}\subseteq L_0$ we have $L_n/L_{n-1}$ is Galois for every cubic $K_n$. Hence,
$[L_n:L_{n-1}]=[K_n:\IQ]$ for all $n$. Therefore the family $L_0, K_1,K_2,K_3,\ldots$ is linearly disjoint over $\IQ$, and $d_i=3$ for all $i$.} to assume $L_n\neq L_{n-1}$
for all but finitely many $n\in \IN$ in place of the ``linear disjointness''- condition. This proves the claim just before Definition \ref{def notation}, namely, that (\ref{ex:ring}) 
has the Northcott property.

It is worthwhile to mention that the mere existence of a subring of $\Oseen_{\IQ^{(3)}}$ with (N) whose field of fractions is equal to $\IQ^{(3)}$ is easy to prove. The difficulty in the proof of the above result 
comes from the fact that each of the components in (\ref{ex:ring}) is a \emph{maximal} order.

In fact, every ring contains a subring with (N) and with the same field of fractions.
The proof of this statement is merely an exercise but we have not found the result in the literature and we believe it merits publication.
\begin{proposition}
\label{prop: rings}
Let $S$ be a subring of $\Qbar$. Then there exists a subring $T$
of $S$ such that $T$ has $(N)$ and the field of fractions of $S$ and of $T$ are identical.
\end{proposition}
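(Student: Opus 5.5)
The plan is to build $T$ as an increasing union of subrings $T_k\subseteq S$, each contained in a number field. At stage $k+1$ I multiply the new generators by a huge integer, so that any element lying outside the previous field is forced, via a Galois conjugate, to have large height. Northcott's Theorem (in the number field $F_k$) then gives finiteness.

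\emph{Step 1: reduction to integral generators.} $S$ is countable; enumerate it as $s_1,s_2,\ldots$. For each $i$ choose $N_i\in\IN$ with $a_i=N_is_i$ an algebraic integer; then $a_i\in S$, since $S$ contains $\IZ$. Put $F_k=\IQ(a_1,\ldots,a_k)=\IQ(s_1,\ldots,s_k)$, a number field, and $O_k=\IZ[a_1,\ldots,a_k]\subseteq S\cap\Oseen_{F_k}$. The field of fractions $F$ of $S$ is $\bigcup_kF_k$.

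\emph{Step 2: the construction.} Choose integers $M_1\mid M_2\mid M_3\mid\cdots$ with $\log M_k\geq 2k+1$. Set $T_0=\IZ$ and $T_k=T_{k-1}+M_kO_k$. This is a ring: $M_kO_k$ is closed under multiplication, and $T_{k-1}\cdot M_kO_k\subseteq M_kO_k$ because $T_{k-1}\subseteq O_{k-1}\subseteq O_k$. Moreover $T_k\subseteq O_k\subseteq S$, so $T=\bigcup_kT_k$ is a subring of $S$ consisting of algebraic integers. Since $M_ka_i\in T_k$ for $i\leq k$, the field of fractions of $T$ contains every $s_i$, hence equals $F$.

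\emph{Step 3: the key height estimate.} Let $\alpha\in T_n$ with $n>k$. Because the $M_j$ form a divisibility chain, we can write $\alpha=t+z$ with $t\in T_k\subseteq F_k$ and $z\in M_{k+1}O_n$. Suppose $\alpha\notin F_k$. Then some embedding $\sigma$ of $F_n$ into $\Qbar$ that is the identity on $F_k$ satisfies $\sigma\alpha\neq\alpha$. Hence $\gamma=\alpha-\sigma\alpha=z-\sigma z$ is a nonzero element of $M_{k+1}\Oseen_{\Qbar}$, so $\gamma/M_{k+1}$ is a nonzero algebraic integer. For a nonzero algebraic integer $\beta$ the norm satisfies $|N(\beta)|\geq 1$, so $h(M_{k+1}\beta)\geq \frac{1}{D}\log|N(M_{k+1}\beta)|\geq\log M_{k+1}$, where $D$ is the degree of $\beta$. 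Therefore $h(\gamma)\geq\log M_{k+1}\geq 2k+3$. On the other hand, $h(\gamma)\leq h(\alpha)+h(\sigma\alpha)+\log2=2h(\alpha)+\log 2$. Consequently, if $h(\alpha)\leq k$ then $\alpha\in F_k$.

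\emph{Step 4: conclusion.} Given $X\geq 0$, take an integer $k\geq X$. By Step 3, every $\alpha\in T$ with $h(\alpha)\leq X$ lies in the number field $F_k$. By Northcott's Theorem there are only finitely many such $\alpha$, so $T$ has (N).

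The only delicate point is Step 3. Everything hinges on the congruence $\alpha\equiv t \pmod{M_{k+1}}$ surviving at all later stages; the divisibility chain of the $M_j$ and the integrality of $T$ ensure this. Integrality is exactly why the generators were cleared of denominators in Step 1: otherwise $S$ could be a field and $M S=S$ would carry no information.
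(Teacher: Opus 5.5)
Your proof is correct. It shares the paper's basic mechanism: make the part of an element that is new at each stage divisible by a large integer, take a conjugate over the previous field so that the old part cancels, bound the height of the resulting nonzero integer multiple of an algebraic integer from below via its norm, and finish with Northcott's Theorem in a number field.

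The construction of $T$, however, is genuinely different. The paper adjoins one generator at a time, $T_n=T_{n-1}[\alpha_n]$ with $\alpha_n=nI_{n-1}\theta_n$. The index $I_{n-1}=[\Oseen_{L_{n-1}}:T_{n-1}]$ is needed so that $\alpha_n^{d_n}$ reduces to a $T_{n-1}$-combination of lower powers of $\alpha_n$. A minimal-level representation $\gamma=\sum_i g_i(\alpha_1,\ldots,\alpha_{n-1})\alpha_n^i$ then yields the multiplicative factorization $\gamma-\gamma'=(\alpha_n-\alpha_n')G$. This route also requires arranging $L_{n-1}\neq L_n$ and treating the number-field case separately.

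Your ring $T=\IZ+\sum_k M_kO_k$ is closed under multiplication for free, because $M_kO_k$ is an ideal of $O_k\supseteq T_{k-1}$. The divisibility chain then gives the additive congruence $\alpha\equiv t\pmod{M_{k+1}}$ with $t\in T_k$. Consequently $\alpha-\sigma\alpha=z-\sigma z$ follows without any index, power basis, or minimal representation.

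What the paper's version buys is an explicitly generated ring $\IZ[\alpha_1,\alpha_2,\ldots]$ whose multipliers need not divide one another, since the power-basis representation isolates the top level. Yours needs the chain condition precisely to sweep all higher levels into a single congruence class modulo $M_{k+1}$. In return, your argument is shorter and avoids that module-theoretic bookkeeping.
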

We are not aware of any examples of fields $K$ such that $\Oseen_K$ has (N) but $K$ does not
(but see \cite[Exemple 4.6]{GaudronRemondSiegel} for a field without (N) whose ring of integers has (N) w.r.t. to a larger ``height'', called house\footnote{The house of an algebraic integer is the maximal modulus of its conjugates.}).\\

Let us very briefly touch on two different applications of Corollary \ref{cor: generalheightdegreebound} and Proposition \ref{prop: rings} for rings of totally real integers.

In 1962 Julia Robinson \cite{Robinson1962} showed that the semi-ring $(\IN_0,0,1,+,\cdot)$ is first-order definable in the 
ring of integers of any totally real field $L$, provided $\Oseen_L$ has (N). Vidaux and Videla \cite{ViVi15AMS} observed that
Robinson's proof also works for arbitrary rings $R$ of totally real integers (not necessarily the ring of integers of a field) with (N).\footnote{In fact, it suffices if $R$ has (N) w.r.t. the house.} Hence, any such ring has undecidable first-order theory. 

Our next application comes from more recent work of the first author, Daans and Kala \cite{DKM24}. 
Let  $R$ be a ring of totally real integers. A totally positive definite quadratic form $Q(x_1,\ldots,x_n)$ with coefficients in $R$ is said to be universal over $R$ if every totally positive element of $R$ is represented by $Q$.
Famously, the sum of four squares is universal over $\IZ$ by Lagrange's Theorem. If $R$   is 
the ring of integers
a number field then
there exists a universal quadratic form over $R$ (see the survey \cite{Kala23} for this and more background). 
On the other hand, if $R$ is \emph{not} contained in a number field then
it is usually difficult to decide whether a universal quadratic form over $R$ exists. The answer is definitely no if $R$ has (N), as was shown in \cite[Theorem 3.2]{DKM24}.   \\

Our next goal is to formulate a quantitative version of Bombieri and Zannier's Question \ref{question:BoZa}. Again we will focus on the case $d=3$.
To this end we introduce the following notion of
``size'' for subfields of $\IQ^{(3)}$. 
Since for $L\subseteq \IQ^{(3)}$ we have ``equality'' if and only if $L$ contains all cubic fields,
we measure the size of a subfield $L$ of $\IQ^{(3)}$ by measuring how many cubic fields $L$ contains.
To this end let $N_L(X)$ be the number of cubic fields in $L$ with modulus of the discriminant no larger than $X$.
\begin{definition}\label{Size}
Let $L$ be a subfield of $\IQ^{(3)}$.
We define the size $\siz(L)$ of $L$  as 
$$\siz(L):=\liminf_{X\rightarrow \infty}\frac{\log^+ N_{L}(X)}{\log N_{\IQ^{(3)}}(X)},$$
where $\log^+(x):=\log(\max\{1,x\})$.
\end{definition}
Note that $\siz(L)\in [0,1]$, $\siz(L_1)\leq \siz(L_2)$ whenever $L_1\subset L_2$, and $\siz(\IQ^{(3)})=1$. 
Here is a quantitative version of Bombieri and Zannier's question.
\begin{question}\label{question: BoZa3'}
How big can a subfield $L$ of $\IQ^{(3)}$ with Northcott property be?
\end{question}
Wright \cite[Theorem I.2]{wright1989distribution}, choosing
$k=\IQ, G=C_3$ there, showed that the number of abelian cubic extensions grows like $X^{1/2}$. 
In contrast, the Davenport-Heilbronn Theorem \cite{81} states that
the total number of cubic extensions grows linearly. Thus the field $\IQ_{ab}^{(3)}$ is a field with Northcott property and has size $1/2$.\\

Using a criterion of the third author \cite[Corollary 4]{WidmerPropN}, one can easily construct larger 
subfields of $\IQ^{(3)}$ with (N), say of size $\geq 3/5$ (see Section \ref{sec: qantBZQ}). But elementary constructions seem insufficient
to provide a complete answer to Question \ref{question: BoZa3'}.
Fortunately, a deep result of Nakagawa provides sufficiently precise information on the distribution of cubic number fields with ``nearly'' prime discriminant. Combining this result with the aforementioned criterion gives the following observation which is proved in Section \ref{sec: qantBZQ}.
\begin{proposition}\label{prop:SL1}
There exists a subfield $L$ of $\IQ^{(3)}$ with the Northcott property and maximal size $\siz(L)=1$.
\end{proposition}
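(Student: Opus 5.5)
We construct $L$ as the compositum of a carefully chosen infinite family of cubic fields. The plan is to combine the third author's criterion \cite[Corollary 4]{WidmerPropN} with Nakagawa's counting result for cubic fields with nearly prime discriminant.

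As we recall it, the criterion says the following. Let $K_1,K_2,\ldots$ be number fields of uniformly bounded degree with compositum $L$. Suppose that for each $n$, every prime $\mathfrak p$ of $L_{n-1}=K_1\cdots K_{n-1}$ that ramifies in $L_n$ has norm $N\mathfrak p^{1/[L_{n-1}:\IQ]}$ growing to infinity. Then $L$ has (N). This is a ``sufficiently ramified tower'' condition, in the same spirit as Silverman's bound (\ref{ineq: Silverman}).

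Concretely, we would take all cubic fields $K$ whose discriminant has the shape $\Delta_K=\pm q\cdot m$, with $q$ a large prime and $m$ bounded. Enumerate them as $K_1,K_2,\ldots$ by increasing $|\Delta_{K_i}|$. We then thin the list so that the prime parts $q_i$ are distinct and grow quickly.

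The criterion requires some rapid growth of $q_n$ relative to $[L_{n-1}:\IQ]\le 6^{n}$. Roughly, a totally ramified prime above $q_n$ contributes $q_n^{c/[L_{n-1}:\IQ]}$-type factors, so a naive use needs $\log q_n\gg n$. That is incompatible with keeping almost all fields. The growth is fine only if the criterion is phrased with the relative discriminant, so that the local degree is what matters. The right form is ``the minimum over ramified primes $p$ of $p^{1/e}$ tends to infinity'', with $e$ the ramification index, which is at most $6$ here. This is what we would try to verify the criterion gives.

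Under that form, the condition becomes that each new field $K_n$ introduces ramification only at primes $\ge q_n\to\infty$ which are not already ramified in $L_{n-1}$. A prime $p$ ramifies in $L_n/L_{n-1}$ only if $p\mid\Delta_{K_n}$. So the requirement is simply that the fields we select have pairwise coprime discriminants whose prime factors tend to infinity.

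Hence the selection is: take cubic fields whose discriminants are squarefree-like and have all prime factors large. These should be fields with $|\Delta_K|$ prime or nearly prime, taken at most one per prime. We need the distinct discriminants to be pairwise coprime, and primes are automatically so.

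It remains to count these fields. By Nakagawa, the number of cubic fields with $|\Delta_K|\le X$ and $|\Delta_K|$ having all prime factors $>X^{\delta}$ (nearly prime) is $\gg X/(\log X)^{A}$. Each prime $q$ can be the discriminant of at most $O(q^{\epsilon})$ cubic fields, since class numbers of quadratic fields are small. Choosing one field per discriminant therefore still leaves $\gg X^{1-\epsilon}$ fields. Pairwise coprimality holds for primes, and for nearly prime discriminants a greedy selection loses only a bounded proportion. Then $\log N_L(X)/\log N_{\IQ^{(3)}}(X)\to1$, using the Davenport--Heilbronn asymptotic $N_{\IQ^{(3)}}(X)\asymp X$. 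This gives $\siz(L)=1$.

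The main obstacle is obtaining the exact form of the criterion together with Nakagawa's theorem in the shape needed. Besides counting the selected fields, we must ensure that $L$ contains no extra cubic fields whose presence could spoil the argument. Extra cubic fields can only increase $N_L$, so they do not affect $\siz(L)$. The genuine difficulty is therefore the lower count plus the coprime selection.
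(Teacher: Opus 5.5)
Your overall architecture (a Widmer-type ramification criterion, Nakagawa, and a comparison with Davenport--Heilbronn) matches the paper's. However, the counting step, which is the real content of the proposition, has a genuine gap. Nakagawa's theorem is not a lower bound for the number of cubic fields with nearly prime discriminant. It is the identity $\mathfrak{N}(-n)+\mathfrak{N}(-81n)=3\mathfrak{N}(3n)+1$, valid when $-n$ is an imaginary quadratic field discriminant with $3\nmid n$. More seriously, you pass from many fields to many distinct discriminants via the claim that a prime $q$ is the discriminant of only $O(q^{\epsilon})$ cubic fields ``since class numbers of quadratic fields are small''. That justification is false. Imaginary quadratic class numbers are $|D|^{1/2+o(1)}$, and the number of cubic fields of fundamental discriminant $D$ is $(|\mathrm{Cl}(\IQ(\sqrt{D}))[3]|-1)/2$; a bound $|D|^{\epsilon}$ for this quantity is the open $3$-torsion conjecture. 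The best unconditional bound, $|D|^{1/3+\epsilon}$ (Ellenberg--Venkatesh), turns $X^{1-o(1)}$ fields into a guaranteed $X^{2/3-o(1)}$ distinct discriminants only, i.e.\ $\siz(L)\geq 2/3$, not $1$. Your assertion that a greedy coprime selection among nearly prime discriminants loses only a bounded proportion is likewise unsupported.

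Part of the difficulty is self-inflicted. Misremembering the criterion leads you to demand that \emph{all} ramified primes be large and that the discriminants be pairwise coprime. The criterion the paper uses (Widmer's Corollary 4) only requires that each $K_i$ have \emph{one} prime $p_i\mid\Delta_{K_i}$ with $p_i\nmid\Delta_{K_j}$ for $j<i$. No rapid growth of $q_n$ is needed either. If $q$ is unramified in $L_{n-1}$ but ramified in a cubic $K_n$, then every prime of $L_{n-1}$ above $q$ ramifies in $L_n$. Hence $N_{L_{n-1}/\IQ}(D_{L_n/L_{n-1}})\geq q^{[L_{n-1}:\IQ]}$, and this is exactly what the normalization in Widmer's general criterion absorbs. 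With this form, Nakagawa's identity finishes the job directly. For an odd prime $p\neq 3$, take $n=p$ if $p\equiv 3\pmod 4$ and $n=4p$ if $p\equiv 1\pmod 4$. The right-hand side is $\geq 1$, so there is a cubic field $F_p$ with $\Delta_{F_p}\in\{-p,-4p,-81p,-324p\}$. These discriminants have prime factors in $\{2,3,p\}$ only, so $p$ is a new ramified prime for $F_p$, and the compositum of the $F_p$ has (N). The fields with $p\leq X/324$ give $N_L(X)\gg X/\log X$, whence $\siz(L)=1$. Note that your pairwise-coprimality requirement would discard exactly the fields of discriminant $-4p,-81p,-324p$ that the identity may force on you.
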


\section{Proof of Proposition \ref{prop: rings}}
Let $L=\Frac(S)$ be the field of fractions of $S$.
If $L$ is a number field then we can take $T=S$.
Hence we can assume $L/\IQ$ is an infinite extension.
We  can find a sequence of algebraic integers $(\theta_n)_n$ such that
$\IZ[\theta_1,\theta_2,\theta_3,\ldots]\subseteq S$ and 
$L=\IQ(\theta_1,\theta_2,\theta_3,\ldots)$.
Let $L_n=\IQ(\theta_1,\ldots,\theta_n)$ 
so that $L=\cup_{n\geq 1} L_n$.
Furthermore, we can assume that 
$L_{n-1}\neq L_n$ for every $n\in \IN$.
Let us define $\alpha_1=\theta_1$ and set $T_1=\IZ[\alpha_1]$. Hence, $\Frac(T_1)=L_1$. 
Now suppose we have constructed $T_{n-1}=\IZ[\alpha_1,\ldots,\alpha_{n-1}]$. Suppose $T_{n-1}$ has field of fractions 
$L_{n-1}$. Then 
$$I_{n-1}=[\Oseen_{L_{n-1}}:T_{n-1}]$$ 
is finite and we set 
$$\alpha_n=nI_{n-1}\theta_n.$$
Hence, the field of fractions of $T_n=T_{n-1}[\alpha_n]$ is equal to $L_n$.
We set  
$$
T=\bigcup_{n\geq 1} T_n=\IZ[\alpha_1,\alpha_2,\alpha_3,\ldots].
$$ 
Since $\theta_n$ is an algebraic integer it follows from Gauss' Lemma that its minimal polynomial 
$$f_{\theta_n/L_{n-1}}(x)=x^{d_n}+a_1x^{d_n-1}+\cdots+a_{d_n}$$ over  $L_{n-1}$ 
lies in $\Oseen_{L_{n-1}}[x]$. And since $L_n\neq L_{n-1}$ it follows that
$d_n>1$. 
Using that $(nI_{n-1})^{d_n}f_{\theta_n/L_{n-1}}(\theta_n)=0$ we find 
\begin{alignat}1\label{eq:Itheta0}
\alpha_n^{d_n}=(nI_{n-1}\theta_n)^{d_n}=-a_1(nI_{n-1})\alpha_n^{d_n-1}-\cdots -(nI_{n-1})^{d_n-1}a_{d_n-1}\alpha_n-(nI_{n-1})^{d_n} a_{d_n}. 
\end{alignat}
Next note that 
\begin{alignat*}1
I_{n-1}\Oseen_{L_{n-1}}\subset T_{n-1}.
\end{alignat*}
Hence, it follows from (\ref{eq:Itheta0}) that
\begin{alignat*}1
\alpha_n^{d_n}\in T_{n-1}+T_{n-1}\alpha_n+\cdots + T_{n-1}\alpha_n^{d_n-1}. 
\end{alignat*}

Suppose $\gamma \in T$. There exists a minimal integer $n$ such that there are polynomials 
$g_0,\ldots,g_{d_n-1}$ in $\IZ[x_1,\ldots,x_{n-1}]$ with
\begin{alignat}1\label{eq:gammarep}
\gamma=\sum_{i=0}^{d_n-1}g_i(\alpha_1,\ldots,\alpha_{n-1})\alpha_n^i\in T_{n-1}+T_{n-1}\alpha_n+\cdots + T_{n-1}\alpha_n^{d_n-1}. 
\end{alignat}
If $g_i(\alpha_1,\ldots,\alpha_{n-1})=0$ for $1\leq i\leq d_n-1$ then $\gamma=g_0(\alpha_1,\ldots,\alpha_{n-1})$.
This leads to an identity as in (\ref{eq:gammarep}) with $n$ replaced by $n-1$, contradicting the minimality of $n$. Hence there exists $1\leq i\leq d_n-1$ with
$g_i(\alpha_1,\ldots,\alpha_{n-1})\neq 0$. Since $L_n=L_{n-1}(\alpha_n)$ has degree $d_n>1$ over $L_{n-1}$
we conclude that $L_{n-1}(\gamma)\neq L_{n-1}$. Hence, there exists a field homomorphism $\sigma:L_{n-1}(\gamma)\to \Qbar$, not the identity, that fixes $L_{n-1}$.
We extend $\sigma$ to $L_n$ and we write $\sigma(\beta)=\beta'$. 
We get
\begin{alignat*}1
0\neq \gamma-\gamma'=\sum_{i=1}^{d_n-1}g_i(\alpha_1,\ldots,\alpha_{n-1})(\alpha_n^i-(\alpha'_n)^i)=
(\alpha_n-\alpha'_n)G(\alpha_1,\ldots,\alpha_n,\alpha_n'),
\end{alignat*}
where  $G(\alpha_1,\ldots,\alpha_n,\alpha_n')$ is some non-zero algebraic integer in the field
$M=L_n(\alpha_n')$. Set $D=[M:\IQ]$. Using standard estimates for the height we get

\begin{alignat*}1
2H(\gamma)^2\geq H(\gamma-\gamma')\geq |N_{M/\IQ}(\gamma-\gamma')|^{\frac{1}{D}}
=|N_{M/\IQ}(\alpha_n-\alpha'_n)|^{\frac{1}{D}}|N_{M/\IQ}G(\alpha_1,\ldots,\alpha_n,\alpha_n')|^{\frac{1}{D}}
\end{alignat*}
Now $\alpha_n-\alpha'_n=nI_{n-1}(\theta_n-\theta'_n)$, and $I_{n-1}(\theta_n-\theta'_n)$ and $G(\alpha_1,\ldots,\alpha_n,\alpha_n')$ are both non-zero elements in $\Oseen_M$. This proves that
$|N_{M/\IQ}(\alpha_n-\alpha'_n)|^{\frac{1}{D}}\geq n$ and $|N_{M/\IQ}G(\alpha_1,\ldots,\alpha_n,\alpha_n')|^{\frac{1}{D}}\geq 1$. 

We have shown that if $\gamma\in T$ has height below $\sqrt{k/2}$ then $\gamma$ lies in $T_k$. Since $T_k$ is contained in the number field $L_k$ we have only finitely many choices for $\gamma$ with height below $\sqrt{k/2}$ by Northcott's Theorem. This proves that $T$ has $(N)$.

\section{Proof of Theorem \ref{thm: heightdiscbound}}

For the proof of Theorem \ref{thm: heightdiscbound} it is convenient to state the following nearly trivial fact as a lemma. 
\begin{lemma}\label{lem:heightest}
Let $K$ be a subfield of $\Qbar$, let $\theta\in \Qbar$, and let $L=K(\theta)$. Suppose that $d=[L:K]>1$, and let $\alpha\in L$ be such that $L=K(\alpha)$.
Let $(\beta_0,\ldots,\beta_{d-1})\in K^d$ be the unique vector with $\alpha =\beta_{d-1}\theta^{d-1}+\cdots +\beta_1\theta +\beta_0$. 
Let $\sigma_1,\ldots,\sigma_d$ be the $d$ distinct embeddings of $L$ into $\Qbar$, fixing $K$. Set
\begin{alignat*}1
s_{k,i,j}=\sum_{r=0}^{k}\sigma_i(\theta)^r\sigma_j(\theta)^{k-r}, \text{ and } 
\gamma=\prod_{1\leq i<j\leq d}\sum_{k=0}^{d-2}\beta_{k+1}s_{k,i,j}.
\end{alignat*}
Then  
\begin{alignat*}1
0\neq \disc_{L/K}(1,\alpha,\ldots,\alpha^{d-1})=\prod_{1\leq i<j\leq d}(\sigma_i(\alpha)-\sigma_j(\alpha))^2 
=\disc_{L/K}(1,\theta,\ldots,\theta^{d-1})\gamma^2.
\end{alignat*}
\end{lemma}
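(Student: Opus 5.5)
The plan is a direct Vandermonde computation combined with a factorization of the differences $\sigma_i(\alpha)-\sigma_j(\alpha)$.

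First, the equality $\disc_{L/K}(1,\alpha,\ldots,\alpha^{d-1})=\prod_{i<j}(\sigma_i(\alpha)-\sigma_j(\alpha))^2$ is the standard Vandermonde identity. By definition the discriminant is $\det(\sigma_i(\alpha^{k}))^2$. The matrix $(\sigma_i(\alpha)^k)_{i,k}$ is a Vandermonde matrix, so its determinant is $\pm\prod_{i<j}(\sigma_j(\alpha)-\sigma_i(\alpha))$, and squaring removes the sign.

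For nonvanishing, note that $L=K(\alpha)$ with $[L:K]=d$. Hence the embeddings $\sigma_1,\ldots,\sigma_d$ are determined by their values on $\alpha$, and being distinct they give pairwise distinct $\sigma_i(\alpha)$. So every factor is nonzero. The same identity with $\theta$ in place of $\alpha$ gives $\disc_{L/K}(1,\theta,\ldots,\theta^{d-1})=\prod_{i<j}(\sigma_i(\theta)-\sigma_j(\theta))^2$, which is also nonzero because $L=K(\theta)$.

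The key step is the factorization of each difference. Since the $\beta_k$ lie in $K$ and are fixed by every $\sigma_i$, we have $\sigma_i(\alpha)=\sum_{k=0}^{d-1}\beta_k\sigma_i(\theta)^k$. Therefore
\begin{alignat*}1
\sigma_i(\alpha)-\sigma_j(\alpha)=\sum_{k=1}^{d-1}\beta_k\bigl(\sigma_i(\theta)^k-\sigma_j(\theta)^k\bigr).
\end{alignat*}
We then use $x^{k+1}-y^{k+1}=(x-y)\sum_{r=0}^{k}x^ry^{k-r}$ and reindex $k\mapsto k+1$. This gives
\begin{alignat*}1
\sigma_i(\alpha)-\sigma_j(\alpha)=(\sigma_i(\theta)-\sigma_j(\theta))\sum_{k=0}^{d-2}\beta_{k+1}s_{k,i,j}.
\end{alignat*}

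Taking the product over $i<j$ and squaring yields $\prod_{i<j}(\sigma_i(\alpha)-\sigma_j(\alpha))^2=\disc_{L/K}(1,\theta,\ldots,\theta^{d-1})\,\gamma^2$, as claimed. Nothing here is a real obstacle. The only care needed is the index shift in the definition of $s_{k,i,j}$, and checking that the discriminants are taken relative to the $K$-embeddings, so that the $\beta_k$ are indeed fixed.
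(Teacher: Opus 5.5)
Your proof is correct and follows essentially the same route as the paper. Both arguments get nonvanishing from $L=K(\alpha)$, expand $\sigma_i(\alpha)=\sum_k\beta_k\sigma_i(\theta)^k$, factor $\sigma_i(\alpha)-\sigma_j(\alpha)=(\sigma_i(\theta)-\sigma_j(\theta))\sum_{k=0}^{d-2}\beta_{k+1}s_{k,i,j}$, and take the product over $i<j$; you also spell out the Vandermonde identity behind the first equality, which the paper leaves implicit.
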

\begin{proof}
As $L=K(\alpha)$ we conclude $0\neq \prod_{1\leq i<j\leq d}(\sigma_i(\alpha)-\sigma_j(\alpha))$. Further, we have 
\begin{alignat*}1
\sigma_i(\alpha)=\beta_{d-1}\sigma_i(\theta)^{d-1}+\cdots +\beta_1\sigma_i(\theta) +\beta_0,
\end{alignat*}
and thus
\begin{alignat*}1
\sigma_i(\alpha)-\sigma_j(\alpha) =(\sigma_i(\theta)-\sigma_j(\theta))\left(\sum_{k=0}^{d-2}\beta_{k+1}s_{k,i,j}\right).
\end{alignat*}
Hence, we get
\begin{alignat*}1
0\neq \prod_{1\leq i<j\leq d}(\sigma_i(\alpha)-\sigma_j(\alpha))^2 =\gamma^2  \prod_{1\leq i<j\leq d}(\sigma_i(\theta)-\sigma_j(\theta))^2
=\disc_{L/K}(1,\theta,\ldots,\theta^{d-1})\gamma^2.
\end{alignat*}
\end{proof}

\begin{lemma}\label{lem:reduct}
Let  $L_0\subseteq \Qbar$ be a field, let  $(K_i)_{i}$ be a sequence of number fields,
and suppose
\begin{alignat*}1
\alpha\in L_m\backslash L_{m-1} \text{ and } \alpha\in R_n\backslash R_{n-1},
\end{alignat*}
where $L_m$ and $R_n$ are as in (\ref{def: RnLn}). 
Let $\theta_m,\ldots,\theta_n$ be algebraic integers with $K_i=\IQ(\theta_i)$ for $m\leq i\leq n$, and set
$I_i=[\Oseen_{K_i}:\IZ[\theta_i]]$ for $m\leq i\leq n$.
Suppose that $d_i:=[K_i:\IQ]=[L_i:L_{i-1}]$ whenever
 $m< i\leq n$.
Then there exist $\beta_{d_{m}-1,m-1},\ldots,\beta_{1,m-1},\beta_{0,m-1}\in R_{m-1}$ such that 
\begin{alignat*}1
\alpha=\frac{1}{I_mI_{m+1}\cdots I_n}\left(\beta_{d_m-1,m-1}\theta_m^{d_m-1}+\cdots +\beta_{1,m-1}\theta_m+\beta_{0,m-1}\right).
\end{alignat*}
\end{lemma}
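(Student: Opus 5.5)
The plan is a downward induction on $n$. We peel off the fields $K_n,K_{n-1},\ldots,K_{m+1}$ one at a time, using the degree condition to discard the higher powers of each $\theta_i$, until only the $\theta_m$-coordinate remains. The basic input is the classical fact $I_i\Oseen_{K_i}\subseteq \IZ[\theta_i]$: the index annihilates the quotient $\Oseen_{K_i}/\IZ[\theta_i]$. Since $R_n$ is generated as a ring by $\Oseen_{L_0},\Oseen_{K_1},\ldots,\Oseen_{K_n}$, every element of $R_n$ is a finite sum of products of elements of these rings. Multiplying by $I_m\cdots I_n$ and absorbing one factor $I_i$ into each occurrence of an element of $\Oseen_{K_i}$ then gives
\begin{alignat*}1
I_mI_{m+1}\cdots I_n\,\alpha\in R_{m-1}[\theta_m,\theta_{m+1},\ldots,\theta_n].
\end{alignat*}
A priori an element of $\Oseen_{K_i}$ might occur several times in a product. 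This is handled by first writing each monomial as a product of single elements of $\Oseen_{K_i}$ (multiplying within $\Oseen_{K_i}$), so one factor $I_i$ per field suffices.

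Next, each $\theta_i$ is an algebraic integer of degree $d_i$ over $\IQ$, with monic minimal polynomial in $\IZ[x]$. We can therefore reduce powers, writing
\begin{alignat*}1
I_m\cdots I_n\,\alpha=\sum_{\mathbf{e}} b_{\mathbf{e}}\,\theta_m^{e_m}\theta_{m+1}^{e_{m+1}}\cdots\theta_n^{e_n},\qquad 0\le e_i<d_i,\ b_{\mathbf{e}}\in R_{m-1}.
\end{alignat*}

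Now use the hypothesis $[L_i:L_{i-1}]=d_i$ for $m<i\le n$. It says that $1,\theta_i,\ldots,\theta_i^{d_i-1}$ are linearly independent over $L_{i-1}$, because the minimal polynomial of $\theta_i$ over $L_{i-1}$ still has degree $d_i$. Group the sum according to the exponent $e_n$. The coefficient of $\theta_n^{e_n}$ lies in $L_{n-1}$, since it is built from $R_{m-1}$ and $\theta_m,\ldots,\theta_{n-1}$. The left side $I_m\cdots I_n\alpha$ lies in $L_m\subseteq L_{n-1}$ when $n>m$. Uniqueness of coordinates in the basis $1,\theta_n,\ldots,\theta_n^{d_n-1}$ of $L_n/L_{n-1}$ then forces every term with $e_n\ge1$ to vanish. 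We may therefore drop them and keep only $e_n=0$. Repeating this for $i=n-1,\ldots,m+1$, using $L_m\subseteq L_{i-1}$ at each step, leaves
\begin{alignat*}1
I_m\cdots I_n\,\alpha=\sum_{e=0}^{d_m-1}b_e\theta_m^e,\qquad b_e\in R_{m-1}.
\end{alignat*}
Setting $\beta_{e,m-1}=b_e$ gives the claim. No condition on $[L_m:L_{m-1}]$ is needed, which is consistent with the statement.

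I expect no serious obstacle. The only points needing care are in the clearing-of-denominators step: checking that one factor of each $I_i$ suffices, and noting that $L_0$-integers appear in the coefficients without any denominators. The hypothesis $\alpha\notin R_{n-1}$ is not used; it merely fixes $n$.
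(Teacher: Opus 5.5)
Your proof is correct and uses the same two ingredients as the paper: $I_i\mathcal{O}_{K_i}\subseteq\mathbb{Z}[\theta_i]$, and the linear independence of $1,\theta_i,\ldots,\theta_i^{d_i-1}$ over $L_{i-1}$ that follows from $[L_i:L_{i-1}]=d_i$. The difference is only bookkeeping. The paper clears one denominator at a time via $R_i\subseteq \frac{1}{I_i}R_{i-1}[\theta_i]$, kills the higher coefficients, and recurses on the constant coefficient in $R_{i-1}$. You instead clear all denominators at once and then peel off $\theta_n,\ldots,\theta_{m+1}$ in the expansion over monomials $\theta_m^{e_m}\cdots\theta_n^{e_n}$.
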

\begin{proof}
Since $\Oseen_{K_n}\subset (1/I_n)\IZ[\theta_n]$ it follows that  $R_{n}\subset (1/I_n)R_{n-1}[\theta_n]$. Hence, there exist 
$$\beta_{d_n-1,n-1},\beta_{d_n-2,n-1},\ldots, \beta_{0,n-1}\in R_{n-1}$$
so that
\begin{alignat*}1
\alpha=\frac{1}{I_n}\left(\beta_{d_n-1,n-1}\theta_n^{d_n-1}+\cdots +\beta_{1,n-1}\theta_n+\beta_{0,n-1}\right).
\end{alignat*}
If $n>m$ then $\beta_{d_n-1,n-1}=\cdots=\beta_{1,n-1}=0$ due to the fact that $[L_{n-1}(\theta_n):L_{n-1}]=d_n$, and just as we did for $\alpha$ we can write 
\begin{alignat*}1
\beta_{0,n-1}=\frac{1}{I_{n-1}}\left(\beta_{d_{n-1}-1,n-2}\theta_{n-1}^{d_{n-1}-1}+\cdots +\beta_{1,n-2}\theta_{n-1}+\beta_{0,n-2}\right)
\end{alignat*}
with $\beta_{d_{n-1}-1,n-2},\ldots,\beta_{1,n-2},\beta_{0,n-2}\in R_{n-2}$. Continuing this way we end up with 
\begin{alignat*}1
\alpha=\frac{1}{I_mI_{m+1}\cdots I_n}\left(\beta_{d_m-1,m-1}\theta_m^{d_m-1}+\cdots +\beta_{1,m-1}\theta_m+\beta_{0,m-1}\right)
\end{alignat*}
with $\beta_{d_{m}-1,m-1},\ldots,\beta_{1,m-1},\beta_{0,m-1}\in R_{m-1}$.  
\end{proof}

Next we recall  the following classical result. 
\begin{theorem}[von \.{Z}yli\'{n}ski, \cite{Zylinski}]
\label{thm: Zylinski} Let $K$ be a number field. The field index 
\[
i(K):=\mathrm{gcd}\left\{ [\Oseen_K:\IZ[\theta]]:\theta\in\mathcal{O}_{K},\,\,K=\mathbb{Q}(\theta)\right\} 
\]
has the property that any rational prime $p$ with $p\mid i(K)$ satisfies
$p<[K:\mathbb{Q}]$.
\end{theorem}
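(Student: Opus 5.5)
The plan is to prove directly that, for a prime $p\geq n:=[K:\IQ]$, there is some $\theta\in\Oseen_K$ with $K=\IQ(\theta)$ and $p\nmid[\Oseen_K:\IZ[\theta]]$. Then $p\nmid i(K)$, which is the contrapositive of the statement. The tool is Dedekind's criterion. Let $p\Oseen_K=\mathfrak P_1^{e_1}\cdots\mathfrak P_r^{e_r}$ with residue degrees $f_i$, and let $f$ be the minimal polynomial of $\theta$. If $\bar f=\prod_i \bar g_i^{e_i}$ in $\mathbb F_p[x]$, with distinct irreducible $\bar g_i$, $\deg \bar g_i=f_i$, $\bar g_i$ the reduction of $\theta$ modulo $\mathfrak P_i$, and $v_{\mathfrak P_i}(g_i(\theta))=1$ whenever $e_i>1$ (for fixed monic lifts $g_i\in\IZ[x]$), then $\Oseen_K\otimes\IZ_{(p)}=\IZ_{(p)}[\theta]$. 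Hence $p$ does not divide the index.

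First, choose residues. For each $i$ we need an element $\bar\theta_i\in\Oseen_K/\mathfrak P_i\cong\mathbb F_{p^{f_i}}$ generating that field, such that the minimal polynomials over $\mathbb F_p$ of the $\bar\theta_i$ are pairwise distinct. For a fixed degree $f$, the number of primes $\mathfrak P_i$ with $f_i=f$ is at most $n/f$. So it suffices that the number $N_p(f)$ of monic irreducible polynomials of degree $f$ over $\mathbb F_p$ is at least $n/f$ whenever $p\geq n$.
\begin{itemize}
\item For $f=1$ this reads $N_p(1)=p\geq n$, which holds.
\item For $f\geq 2$, use the standard bound $fN_p(f)\geq p^f-\sum_{d\mid f,\,d<f}p^d\geq p^f-2p^{f/2}$, which is at least $p\geq n$ once $p\geq 3$.
\item The case $p=2$, so $n\leq 2$, is checked by hand: there is at most one prime of degree $2$, and $x^2+x+1$ is available.
\end{itemize}
This counting step is exactly where the hypothesis $p\geq n$ enters, and I expect it to be the only genuinely delicate point.

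Next, lift the residues. Fix monic lifts $g_i\in\IZ[x]$ of the chosen minimal polynomials. Pick $\theta_i'\in\Oseen_K$ lifting $\bar\theta_i$. If $e_i>1$ and $v_{\mathfrak P_i}(g_i(\theta_i'))\geq 2$, replace $\theta_i'$ by $\theta_i'+\pi_i$ with $\pi_i$ a uniformizer at $\mathfrak P_i$. This is the usual argument: $g_i'(\bar\theta_i)\neq0$ because $\bar g_i$ is separable over the finite field, so the valuation drops to exactly $1$. By the Chinese remainder theorem there is $\theta_0\in\Oseen_K$ with $\theta_0\equiv\theta_i'\pmod{\mathfrak P_i^2}$ for all $i$.

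The conditions above depend only on $\theta$ modulo $\prod_i\mathfrak P_i^2\supseteq p^2\Oseen_K$. So every $\theta\in\theta_0+p^2\Oseen_K$ satisfies them, provided $\IQ(\theta)=K$. In that case $f\equiv\prod_i g_i^{e_i}\pmod p$ automatically, by comparing degrees and the factorization of $p$, and Dedekind's criterion applies.

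Finally, ensure primitivity. $K$ has only finitely many proper subfields, and each is a proper $\IQ$-subspace of $K$. The coset $\theta_0+p^2\Oseen_K$ is Zariski dense in $K\otimes\IR$, so it cannot be covered by finitely many proper subspaces. Hence some $\theta$ in the coset generates $K$. For this $\theta$ we get $p\nmid[\Oseen_K:\IZ[\theta]]$, so $p\nmid i(K)$, which proves the theorem.
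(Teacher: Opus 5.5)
Your proof is correct. The paper gives no proof to compare it with: it quotes the theorem from von \.{Z}yli\'{n}ski and uses it as a black box in Proposition~\ref{prop: heightbound}. There it only needs, for $p\geq d_i$, some integral generator $\theta_i$ of $K_i$ with $p\nmid I_i$.

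Your argument proves exactly this, in a self-contained way, and is essentially Hensel's classical criterion for common index divisors:
\begin{itemize}
\item $p\nmid i(K)$ once the primes above $p$ of each residue degree $f$ can be assigned pairwise distinct monic irreducible polynomials of degree $f$ over $\mathbb F_p$;
\item the count $fN_p(f)\geq p^f-2p^{f/2}\geq p\geq n$ (for $p\geq 3$, $f\geq 2$), together with the cases $f=1$ and $p=2$, shows this is possible when $p\geq n$.
\end{itemize}
The citation is shorter. Your route makes the paper self-contained and gives more: a whole residue class of good generators. The conditions depend only on $\theta$ modulo $\prod_i\mathfrak P_i^{\min\{2,e_i\}}\supseteq p\Oseen_K$, so modulo $p$ already suffices.

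Two compressed steps deserve a line each.
\begin{itemize}
\item Your valuation form of Dedekind's criterion follows from the textbook form. Evaluate $f=\prod_j g_j^{e_j}+pF$ at $\theta$ and take $v_{\mathfrak P_i}$, using $v_{\mathfrak P_i}(g_i(\theta))=1$ and $v_{\mathfrak P_i}(g_j(\theta))=0$ for $j\neq i$. This gives $v_{\mathfrak P_i}(F(\theta))=0$, i.e.\ $\bar g_i\nmid\bar F$, whenever $e_i\geq 2$.
\item The congruence $f\equiv\prod_i g_i^{e_i}\pmod p$ needs more than a degree count. Let $a_i$ be the multiplicity of $\bar g_i$ in $\bar f$. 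You need $a_i\geq e_i$, which follows from $f(\theta)=0$ by the same valuation computation. Then $\sum_i a_if_i\leq n=\sum_i e_if_i$ forces equality and rules out other factors.
\end{itemize}
Also, ``$\bar g_i$ the reduction of $\theta$'' should read ``the minimal polynomial of the reduction of $\theta$''.

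For primitivity, the Zariski-density argument is fine. A more elementary alternative: fix a primitive $\alpha\in\Oseen_K$; then each proper subfield contains at most one of the elements $\theta_0+kp^2\alpha$ with $k\in\IZ$.
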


Combining Lemma \ref{lem:heightest}, Lemma \ref{lem:reduct}, and Theorem \ref{thm: Zylinski} allows us to prove the following proposition.

\begin{proposition}\label{prop: heightbound}
Let  $L_0\subseteq \Qbar$ be a field, let  $(K_i)_{i}$ be a sequence of number fields,
and suppose
\begin{alignat*}1
L_m=L_{m-1}(\alpha) \text{ and } \alpha\in R_n\backslash R_{n-1},
\end{alignat*}
where $L_m$ and $R_n$ are as in (\ref{def: RnLn}). Let $p$ be a  prime, and suppose that $p\geq d_i=[L_i:L_{i-1}]$ whenever $m\leq i\leq n$.
Write 
$$\Delta=\disc_{L_m/L_{m-1}}(1,\alpha,\ldots,\alpha^{d_m-1}).$$
Then
\begin{alignat*}1
\prod_{v\mid p}\max\{1, |\Delta^{-1}|_v\}^{\frac{d_v}{[L_m:\IQ]}}\geq
p^{\ord_p(\Delta_{K_m})}.
\end{alignat*}
\end{proposition}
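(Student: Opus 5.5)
Our approach combines Lemma \ref{lem:reduct} and Lemma \ref{lem:heightest}, with Theorem \ref{thm: Zylinski} used to choose the generators prime by prime. Fix the prime $p$. For each $m\le i\le n$ we pick an algebraic integer $\theta_i$ with $K_i=\IQ(\theta_i)$ and $p\nmid I_i=[\Oseen_{K_i}:\IZ[\theta_i]]$. This is possible because $p\ge d_i=[K_i:\IQ]$, so $p\nmid i(K_i)$ by von \.{Z}yli\'{n}ski's theorem. Note that $i(K_i)$ is a gcd, but the set of indices realised by generators is not closed under gcd. Still, $p\nmid i(K_i)$ means some generator has index prime to $p$, which is exactly what we need.

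The hypothesis $d_i=[L_i:L_{i-1}]$ of the proposition is the degree condition needed in Lemma \ref{lem:reduct}, so that lemma applies. It gives
\[
\alpha=\frac{1}{J}\bigl(\beta_{d_m-1}\theta_m^{d_m-1}+\cdots+\beta_0\bigr),\qquad J=I_m\cdots I_n,\quad \beta_k\in R_{m-1}\subset \Oseen_{L_{m-1}},
\]
where $J$ is a $p$-adic unit. We then apply Lemma \ref{lem:heightest} with $K=L_{m-1}$, $\theta=\theta_m$ and coefficients $\beta_k/J$. Since $L_m=L_{m-1}(\alpha)$ and $d_m=[L_m:L_{m-1}]=[K_m:\IQ]$, the element $\theta_m$ also generates $L_m$ over $L_{m-1}$. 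Its minimal polynomial over $L_{m-1}$ is therefore its minimal polynomial over $\IQ$, and so
\[
\disc_{L_m/L_{m-1}}(1,\theta_m,\ldots,\theta_m^{d_m-1})=\disc(1,\theta_m,\ldots,\theta_m^{d_m-1})=\Delta_{K_m}I_m^2 .
\]
Lemma \ref{lem:heightest} then gives $\Delta=\Delta_{K_m}I_m^2\gamma^2$. Here $\gamma=J^{-d_m(d_m-1)/2}\gamma_0$, and $\gamma_0$ is an algebraic integer, since it is a polynomial in the $\beta_k$ and the conjugates of $\theta_m$. Because $I_m$ and $J$ are prime to $p$, we get $|\Delta|_v\le |\Delta_{K_m}|_v=p^{-\ord_p(\Delta_{K_m})}$ for every place $v\mid p$ of $L_m$.

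For the local product we need care, because $\gamma_0$ lives in the Galois closure rather than in $L_m$. The clean route is to work with $\Delta$ itself, which lies in $L_{m-1}\subset L_m$. Write $\Delta=\Delta_{K_m}\cdot u$ with $u=I_m^2J^{-d_m(d_m-1)}\gamma_0^2$. Then $\gamma_0^2=\Delta/(\Delta_{K_m} I_m^2 J^{-d_m(d_m-1)})$ lies in $L_{m-1}$ and is an algebraic integer, so $|u|_v\le1$ for all $v\mid p$. Hence $|\Delta^{-1}|_v\ge |\Delta_{K_m}|_v^{-1}=p^{\ord_p(\Delta_{K_m})}\ge 1$ at every $v\mid p$. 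Taking the product with weights $d_v/[L_m:\IQ]$, which sum to $1$ over $v\mid p$, gives the claimed bound $p^{\ord_p(\Delta_{K_m})}$.

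The main obstacle is the step that keeps the denominator $p$-adically harmless. It requires that every $I_i$ for $m\le i\le n$, and not just $I_m$, be prime to $p$. This is why the hypothesis covers all $d_i$ with $m\le i\le n$, and why Theorem \ref{thm: Zylinski} has to be applied to each $K_i$ separately with a $p$-dependent generator.
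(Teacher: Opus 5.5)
Your proposal is correct and follows the paper's proof essentially step for step. It uses von \.{Z}yli\'{n}ski's theorem to pick $p$-dependent generators with $p\nmid I_m\cdots I_n$, then Lemma~\ref{lem:reduct} and Lemma~\ref{lem:heightest} to write $\Delta=\Delta_{\IZ[\theta_m]}B^2/I^{d_m(d_m-1)}$ with $B$ an algebraic integer, and finally the local estimate at the places above $p$. Your observation that $B^2=\gamma_0^2$ lies in $L_{m-1}$, so that $|B^2|_v\le 1$ is meaningful at places of $L_m$, makes explicit a point the paper leaves implicit.
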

\begin{proof}
We apply Lemma \ref{lem:reduct} to  get
\begin{alignat*}1
\alpha=\frac{1}{I_mI_{m+1}\cdots I_n}\left(\beta_{d_m-1,m-1}\theta_m^{d_m-1}+\cdots +\beta_{1,m-1}\theta_m+\beta_{0,m-1}\right).
\end{alignat*}
Since $p\geq d_i$ whenever $m\leq i\leq n$ it
follows from  von Zylinski's Theorem \ref{thm: Zylinski} that we can choose the integral generators 
$\theta_m,\ldots,\theta_n$ such that with $I=I_mI_{m+1}\cdots I_n$
\begin{alignat*}1
p\nmid I.
\end{alignat*}
Next we apply Lemma \ref{lem:heightest} with $K=L_{m-1}$, $L=L_m$, $d=d_m>1$ (note that $L=K(\alpha)$) and
$$
\beta_i=\frac{\beta_{i,m-1}}{I}
$$ 
for all $0\leq i\leq d_m-1$ to get
$$0\neq \Delta=\disc_{L_m/L_{m-1}}(1,\theta_m,\ldots,\theta_m^{d_m-1})\gamma^2,$$
where 
$$\gamma^2=\frac{B^2}{I^{d_m(d_m-1)}},$$ 
and $B$ is a non-zero algebraic integer.

Since $d_{m}=[L_{m}: L_{m-1}]$ and $K_m=\IQ(\theta_m)$ we infer
$$\disc_{L_m/L_{m-1}}(1,\theta_m,\ldots,\theta_m^{d_m-1})=\disc_{K_m/\IQ}(1,\theta_m,\ldots,\theta_m^{d_m-1})=\Delta_{\IZ[\theta_m]},$$
so that we conclude 
\begin{alignat*}1
\Delta=\Delta_{\IZ[\theta_m]}\cdot \frac{B^2}{I^{d_m(d_m-1)}}.
\end{alignat*}
If $p\nmid \Delta_{K_m}$, then the statement follows at once.
If $p\mid \Delta_{K_m}$, then
\begin{alignat*}1
\prod_{v\mid p}\max\{1, |\Delta^{-1}|_v\}^{\frac{d_v}{[L_m:\IQ]}}
\geq \prod_{v\mid p} |\Delta^{-1}|_v^{\frac{d_v}{[L_m:\IQ]}}
=\prod_{v\mid p} \left|\frac{I^{d_m(d_m-1)}}{\Delta_{\IZ[\theta_m]}B^2}\right|_v^{\frac{d_v}{[L_m:\IQ]}}\geq\left|\frac{I^{d_m(d_m-1)}}{\Delta_{\IZ[\theta_m]}}\right|_p\geq p^{\ord_p(\Delta_{K_m})},
\end{alignat*}
as $\Delta_{K_m}\mid \Delta_{\IZ[\theta_m]}$, and $p\nmid I$. 
\end{proof}

We are now ready to prove Theorem \ref{thm: heightdiscbound}.

\begin{proof}[Proof of Theorem \ref{thm: heightdiscbound}]
Let $\alpha\in R$ and $L_{m-1}(\alpha)=L_m$.
Recall that $\Delta=\disc_{L_m/L_{m-1}}(1,\alpha,\ldots,\alpha^{d_m-1})$. 
Using the basic inequalities
$H(a+b)\leq 2H(a)H(b)$, $H(ab)\leq H(a)H(b)$, and the fact that conjugates have equal height, we conclude
\begin{alignat}1\label{ineq: HDeltaalpha}
H(\Delta)=H\left(\prod_{1\leq i<j\leq d_m}(\sigma_i(\alpha)-\sigma_j(\alpha))^2\right)
\leq (\sqrt{2}H(\alpha))^{2d_m(d_m-1)}.
\end{alignat}
Next we note that $H(\Delta)=H(\Delta^{-1})$.
By hypothesis of the theorem and Remark \ref{rem:ldjn} we have $d_i=[K_i:\IQ]=[L_i:L_{i-1}]$ for every $i\in \IN$.
Since $\alpha\notin L_{m-1}$ there exists $n\geq m$ such that
$\alpha\in R_n\backslash R_{n-1}$.
Using Proposition \ref{prop: heightbound} we get
\begin{alignat*}1
H(\Delta)=H(\Delta^{-1})\geq \prod_{p\geq d}\prod_{v\mid p}\max\{1, |\Delta^{-1}|_v\}^{\frac{d_v}{[L_m:\IQ]}}
\geq\prod_{p\geq d}p^{\ord_p(\Delta_{K_m})}
= \frac{|\Delta_{K_m}|}{\prod_{p<d}p^{\ord_p(\Delta_{K_m})}}.
\end{alignat*}
But we have $\ord_p(\Delta_{K_m})\leq C_d$ (see \cite[Theorem B.2.12]{BG}, and using that $d_m\leq d$), and therefore we get
\begin{alignat*}1
H(\Delta)\geq d^{-dC_d}|\Delta_{K_m}|.
\end{alignat*}
Combining the latter with (\ref{ineq: HDeltaalpha}) proves the claim.
\end{proof}

\section{A variation of Theorem \ref{thm: heightdegreebound}}\label{sec: thmghdb}

It would be desirable to replace the hypothesis that the degrees $d_i$ be prime in Theorem \ref{thm: heightdegreebound} by the weaker hypothesis that $L_i/L_{i-1}$ has no proper intermediate field. This can be done if we assume that the family of Galois closures of $L_0, K_1,K_2,K_3,\ldots$ is linearly disjoint over $\IQ$.
\begin{theorem}\label{thm: generalheightdegreebound}
Let $L_0$, $(K_n)_{n}$,$(L_n)_{n}$ and $(d_n)_{n}$ be as in Definition \ref{def notation} (b), and suppose the Galois closures of  $L_0, K_1,K_2,K_3,\ldots$ are linearly disjoint over $\IQ$.
Moreover, suppose there is no proper intermediate field between $L_{n-1}$ and $L_n$ for every $n\in \IN$,
and all degrees  $d_n$ are bounded from above by some $d\in \IN$.
Then there exists $c_d>0$, depending only on $d$, such that for every $\alpha\in R$
\begin{alignat*}1
H(\alpha)\geq
c_d \left(\log[L_0(\alpha):L_0]\right)^{\frac{1}{d^3}}.
\end{alignat*}
\end{theorem}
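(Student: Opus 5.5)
The argument mirrors the deduction of Theorem \ref{thm: heightdegreebound} from Theorem \ref{thm: heightdiscbound}. Primality of $d_i$ was used in only two places there. First, it gave $[L_i:L_{i-1}]=d_i$ via linear disjointness. Second, it guaranteed $L_{m-1}(\alpha)=L_m$ for every $\alpha\in L_m\setminus L_{m-1}$. The new hypotheses supply both.

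\textbf{Step 1: linear disjointness.} Linear disjointness of the Galois closures of $L_0,K_1,K_2,\ldots$ over $\IQ$ implies linear disjointness of the family $L_0,K_1,K_2,\ldots$ itself, since subfields of linearly disjoint fields stay linearly disjoint. Remark \ref{rem:ldjn} then gives $[L_n:L_{n-1}]=d_n$ for all $n$, so Theorem \ref{thm: heightdiscbound} applies. This step survives any reordering of the $K_i$: the Galois-closure disjointness is a symmetric condition on the family, so after permuting indices the hypotheses of Theorem \ref{thm: heightdiscbound} still hold.

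\textbf{Step 2: reordering and the generator condition.} This is the main obstacle. As before, reorder the $K_i$ so that $|\Delta_{K_i}|$ is increasing. The difficulty is that the hypothesis ``no proper intermediate field between $L_{n-1}$ and $L_n$'' is stated for the original ordering, and after reordering we need it for the new tower. I would deduce it from the Galois-closure disjointness as follows. Let $N_{n-1}$ be the compositum of $L_0$ and the Galois closures of $K_1,\ldots,K_{n-1}$, and let $\widetilde K_n$ be the Galois closure of $K_n$. Disjointness gives $\Gal(N_{n-1}\widetilde K_n/N_{n-1})\cong\Gal(\widetilde K_n/\IQ)=:G$, with $K_n$ corresponding to the subgroup $H=\Gal(\widetilde K_n/K_n)$. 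Intermediate fields of $N_{n-1}K_n/N_{n-1}$ then correspond exactly to intermediate fields of $K_n/\IQ$, i.e.\ to subgroups between $H$ and $G$. The absence of intermediate fields for the original tower should force $H$ to be maximal in $G$; in particular this holds when $n=1$ and, more generally, whenever the base is disjoint from $\widetilde K_n$. Hence $K_n/\IQ$ is primitive, and this is an intrinsic property of $K_n$, independent of ordering.

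It then remains to show that $L_{n-1}K_n/L_{n-1}$ has no intermediate field for any base $L_{n-1}$ built from the other fields. Take a field $F$ with $L_{n-1}\subseteq F\subseteq L_{n-1}K_n$. Using the full Galois-closure disjointness, I would descend: $F\cap \widetilde K_n$, or rather the corresponding subgroup in $G$, is pinned between $H$ and $G$ once one checks that $L_{n-1}K_n\cap\widetilde K_n=K_n$ and that the restriction map on embeddings is bijective. This transfer is the delicate point, and I would verify it through an embedding count of $[F:L_{n-1}]$ compared with $[F\cap \widetilde K_n N:\,\cdot\,]$.

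\textbf{Step 3: conclusion.} With the tower fixed, take $\alpha\in R$ and the minimal $m$ with $\alpha\in L_m$. Then $L_{m-1}\subsetneq L_{m-1}(\alpha)\subseteq L_m$, and primitivity forces $L_{m-1}(\alpha)=L_m$. Theorem \ref{thm: heightdiscbound} gives $H(\alpha)\ge c_d|\Delta_{K_m}|^{1/(2d_m(d_m-1))}$. Since there are $\ll_d X^{d/2}$ number fields of degree $\le d$ with $|\Delta|\le X$ (a crude Hermite/Schmidt bound), we get $|\Delta_{K_m}|\gg_d m^{2/d}$. Finally $[L_0(\alpha):L_0]\le d^m$, and combining these yields the bound $H(\alpha)\ge c_d(\log[L_0(\alpha):L_0])^{1/d^3}$ exactly as in the earlier proof.
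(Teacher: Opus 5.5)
Your argument is correct, but it handles the reordering differently from the paper.

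\textbf{The paper's route.} It proves a lemma that the \emph{existence} of a proper intermediate field somewhere in the tower is invariant under permuting the $K_i$. An adjacent transposition is handled in two parts. A ``going up'' step sends an intermediate field $M$ of $L_1/L_0$ to $MK_2$, which lies properly between $L_0K_2$ and $L_2$. A ``going down'' step uses the restriction isomorphism $\Gal(\widetilde{K_2}L_1/L_1)\cong\Gal(\widetilde{K_2}L_0/L_0)$. An arbitrary bijection of $\IN$ is then approximated by finite products of transpositions.

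\textbf{Your route.} You descend all the way to $\IQ$. You show, step by step, that $L_n/L_{n-1}$ has no proper intermediate field if and only if $K_n/\IQ$ is primitive, and the latter is visibly independent of the ordering. This is more conceptual. It avoids the transposition bookkeeping and the limiting argument for infinite permutations. It also shows the hypothesis could simply be stated as primitivity of each $K_n/\IQ$. The paper proves only the weaker invariance it needs, but its going-down step uses the same Galois isomorphism.

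\textbf{The step you call delicate is routine.} The forward direction needs no Galois closures. If $\IQ\subsetneq E\subsetneq K_n$, linear disjointness gives $[L_{n-1}E:L_{n-1}]=[E:\IQ]$, so $L_{n-1}E$ is a proper intermediate field of $L_n/L_{n-1}$.

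For the converse, take any base $B$ composed of $L_0$ and the $K_i$ with $i\neq n$. It is linearly disjoint from $\widetilde K_n$, so restriction $\Gal(B\widetilde K_n/B)\to\Gal(\widetilde K_n/\IQ)$ is an isomorphism. An order count shows it carries $\Gal(B\widetilde K_n/BE)$ onto $\Gal(\widetilde K_n/E)$ for every $E\subseteq\widetilde K_n$. Hence $F\mapsto F\cap\widetilde K_n$ and $E\mapsto BE$ are inverse bijections between intermediate fields of $BK_n/B$ and of $K_n/\IQ$. There is no need for $N_{n-1}$ or an embedding count.

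A minor point: in the earlier proof, primality was used only to get $L_{m-1}(\alpha)=L_m$. The equality $[L_i:L_{i-1}]=d_i$ came from linear disjointness alone.
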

The proof is almost the same as for Theorem \ref{thm: heightdegreebound} starting by reordering the fields $K_i$
by increasing discriminant $|\Delta_{K_i}|$ and thus redefining the sequence of fields $L_i$. 
Assuming the new family of fields $L_i$ still satisfies the ``no intermediate fields'' condition we can proceed exactly as for the proof of Theorem \ref{thm: heightdegreebound} in the Introduction. The following lemma provides the missing piece. 

\begin{lemma}
Let $L_0$, $(K_n)_{n}$, and $(L_n)_{n}$ be as in Definition \ref{def notation} (b), and suppose the Galois closures of  $L_0, K_1,K_2,K_3,\ldots$ are linearly disjoint over $\IQ$.
Write $K'_n = K_{\sigma(n)}$ for $n\in\IN$, and $L'_n = L_0 \prod_{i=1}^n K'_i$. Then there exists a proper intermediate field between $L_n$ and $L_{n-1}$ for some $n\in\IN$ if and only if there exists a proper intermediate field between $L'_m$ and $L'_{m-1}$ for some $m\in\IN$.
\end{lemma}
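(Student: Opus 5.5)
The plan is to show that, under the hypothesis on the Galois closures, the existence of a proper intermediate field between $L_{n-1}$ and $L_n$ is a property of $K_n$ alone: it holds exactly when $K_n$ has a subfield strictly between $\IQ$ and $K_n$. Granting this, both sides of the stated equivalence say ``some $K_i$ has a proper subfield''. Here I take $\sigma$ to be a bijection of $\IN$, so that $\{K'_i\}=\{K_i\}$. The property involves only one field at a time, so it does not see the ordering.

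\emph{Step 1 (permutation invariance).} For $n\in\IN$ let $N_n$ be the Galois closure of $K_n$ over $\IQ$. Let $G_{n-1}$ be the compositum of the Galois closures of $L_0,K_1,\ldots,K_{n-1}$; this is a Galois extension of $\IQ$, possibly infinite, and it contains $L_{n-1}$. Linear disjointness of a family is defined through its finite subfamilies, and linear disjointness of a finite family does not depend on how the family is ordered. Hence the Galois closures of $L_0,K'_1,K'_2,\ldots$ are again linearly disjoint, and it suffices to prove the criterion for an arbitrary such family. Linear disjointness of $N_n$ from the Galois closures of $L_0,K_1,\ldots,K_{n-1}$ gives $N_n\cap G_{n-1}=\IQ$. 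For Galois extensions this trivial intersection is exactly what we need. If $L_0$ is infinite over $\IQ$, I would pass to a finitely generated subfield of $L_0$ containing the relevant elements to get this intersection.

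\emph{Step 2 (Galois correspondence).} Put $\Omega=L_{n-1}N_n$. It is a finite Galois extension of $L_{n-1}$. Restriction to $N_n$ gives an isomorphism
\[
\Gal(\Omega/L_{n-1})\cong \Gal(N_n/N_n\cap L_{n-1})=\Gal(N_n/\IQ),
\]
because $N_n\cap L_{n-1}\subseteq N_n\cap G_{n-1}=\IQ$. Under this isomorphism the subgroup fixing $L_n=L_{n-1}K_n$ is the set of automorphisms that are trivial on $K_n$, which restricts exactly onto $J=\Gal(N_n/K_n)$. By the fundamental theorem of Galois theory for $\Omega/L_{n-1}$, the fields between $L_{n-1}$ and $L_n$ correspond bijectively to the subgroups $S$ with $J\le S\le \Gal(N_n/\IQ)$. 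By the Galois correspondence for $N_n/\IQ$, these subgroups correspond in turn to the fields between $\IQ$ and $K_n$. The bijection $F\mapsto F\cap N_n$, with inverse $E\mapsto L_{n-1}E$, preserves the property of being proper. Hence $L_n/L_{n-1}$ has a proper intermediate field if and only if $K_n/\IQ$ does, where we use $d_n>1$.

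\emph{Conclusion and main obstacle.} Applying Step 2 to both orderings, the original family has some $n$ with a proper intermediate field between $L_{n-1}$ and $L_n$ iff some $K_i$ has a proper subfield. The same holds for the primed family, which proves the lemma. I expect the main care to be needed in Step 1, in extracting $N_n\cap L_{n-1}=\IQ$ from the linear-disjointness hypothesis when $L_0$ may be an infinite extension. The group-theoretic identification in Step 2 should be routine once the restriction isomorphism is in place.
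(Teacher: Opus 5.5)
Your proof is correct, but it takes a different route from the paper's.

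\textbf{The paper's route.} The paper proves invariance under one adjacent transposition in two halves:
\begin{itemize}
\item a ``going up'' half, which uses only linear disjointness of $L_0,K_1,K_2$ to push an intermediate field of $L_1/L_0$ up to $L_0K_2\subseteq L_2$;
\item a ``going down'' half, which uses the restriction isomorphism $\Gal(\widetilde{K_2}L_1/L_1)\cong\Gal(\widetilde{K_2}L_0/L_0)$ to move an intermediate field of $L_2/L_1$ down to $L_0K_2/L_0$.
\end{itemize}
It then realizes an arbitrary permutation, on the relevant initial segment, as a finite product of transpositions.

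\textbf{Your route.} You use the same restriction isomorphism (natural irrationalities), but push it all the way down to $\IQ$ in one step. This gives an order-free criterion: $L_n/L_{n-1}$ has a proper intermediate field if and only if $K_n$ has a subfield strictly between $\IQ$ and $K_n$.

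\textbf{What each buys.} Your criterion is stronger and more transparent.
\begin{itemize}
\item It locates the offending step in the primed tower exactly, at index $\sigma^{-1}(n)$.
\item It removes all the transposition bookkeeping in the paper's final step.
\item It shows that the ``no intermediate field'' hypothesis of Theorem~\ref{thm: generalheightdegreebound} just says that every $K_n$ is primitive. The reordering by discriminant is then harmless for trivial reasons.
\end{itemize}
The paper's argument is more local, comparing only neighbouring steps of the tower. It avoids the Galois-closure hypothesis in the going-up half, but needs it in the going-down half anyway. So it gains nothing in generality.

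\textbf{A small simplification.} The caveat at the end of your Step 1 is unnecessary. $N_n/\IQ$ is finite Galois and $G_{n-1}$ is Galois over $\IQ$, so linear disjointness of the finite subfamily formed by the Galois closures of $L_0,K_1,\dots,K_n$ directly gives $N_n\cap G_{n-1}=\IQ$. This holds whether or not $L_0$ is a number field, so no reduction to finitely generated subfields is needed.
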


\begin{proof}
First we consider the following special case, where $\sigma = \tau_{(1,2)}$ is the transposition $(1,2)$.
\begin{enumerate}
\item (Going up) Suppose $M$ is a proper intermediate field between $L_0$ and $L_1$. 
We claim that $MK_2$ is a proper intermediate field between $L'_1$ and $L'_2$. 
Clearly $L'_1 \subseteq MK_2 \subseteq L'_2$, so it remains to argue that $MK_2$ cannot be $L'_1$ or $L'_2$. If $MK_2 = L'_1 = L_0K_2$, then we have $L_0 \subsetneq M \subseteq L_1 \cap L'_1 = L_1 \cap MK_2$, which contradicts the linear disjointness of $L_0, K_1, K_2$. If $MK_2 = L'_2 = L_2$, then we have $L_1 \subsetneq MK_2$. But then linear disjointness forces $M=L_1$. So the claim holds.
\item (Going down) Suppose $M$ is a proper intermediate field between $L_1$ and $L_2$. 
We write $\widetilde{K_2}$ for the Galois closure of $K_2$, 
and $H$ for the subgroup of $G=\Gal(\widetilde{K_2}L_1/L_1)$ whose fixed field is $L_2$.
Hence there is a subgroup  $H\subsetneq H_M\subsetneq G$ whose fixed field is $M$.
The hypothesis implies that $L_0$, $\widetilde{K_2}$ and $K_1$ are linearly disjoint over $\IQ$, and so
$[\widetilde{K_2}L_1:L_1]=[\widetilde{K_2}L_0:L_0]$. Hence, the injective group homomorphism
$$\phi:\Gal(\widetilde{K_2}L_1/L_1) \to \Gal(\widetilde{K_2}L_0/L_0),$$
sending $\sigma$ to its restriction $\sigma|_{\widetilde{K_2}L_0}$, defines an isomorphism. The fixed fields of 
$\phi(H)$ and $\phi(G)$ are $L'_1$ and $L_0$ respectively. Hence, the fixed field of 
$\phi(H_M)$ is a proper intermediate field between $L_0$ and $L'_1$.
\end{enumerate}

Applying the arguments above with $L_{n-1}, K_n, K_{n+1}$ in place of $L_0, K_1, K_2$ respectively, we find that the existence of a proper intermediate field in the chain $L_0 \subseteq L_1 \subseteq \ldots$ is preserved under transposition $\tau_{(n,n+1)}$. Furthermore, the claims above actually show that for an arbitrary transposition $\tau$, if there is a proper intermediate field between $L_m$ and $L_{m-1}$ for some $m\in\IN$, then there is a proper intermediate field between $L'_{\tau(m)}$ and $L'_{\tau(m)-1}$.

Finally, let $\sigma: \IN\to\IN$ be an arbitrary bijective function, and suppose there exists a proper intermediate field between $L_n$ and $L_{n-1}$ for some $n\in\IN$. Then there exists a bijective function $\rho: \IN\to\IN$ that is the composition of finitely many transpositions, such that $\rho^{-1}(m) = \sigma^{-1}(m)$ for every $m\le \sigma(n)$. Writing $K''_n = K_{\rho(n)}$ for $n\in\IN$ and $L''_n = L_0 \prod_{i=1}^n K''_i$, we have $L'_m = L''_m$ for every $m\le \sigma(n)$, and the claim implies that there is a proper intermediate field between $L''_{\sigma(n)} = L'_{\sigma(n)}$ and $L''_{\sigma(n)-1} = L'_{\sigma(n)-1}$.
\end{proof}

\section{Proof of Proposition \ref{prop:SL1}}\label{sec: qantBZQ}

The proof of Proposition \ref{prop:SL1} uses a result of Nakagawa and a general criterion for the Northcott property \cite[Theorem 3]{WidmerPropN}. For our purposes the following special case is most convenient.  
\begin{corollary}[Widmer, {\cite[Corollary~4]{WidmerPropN}}]\label{cor: cubic ext}
Let $K_0$ be a number field.
Let $K_1,K_2,\ldots$ be a sequence 
of field extensions of $K_0$ such that
$[K_i:K_0]\leq 3$.
Suppose for every $i$
there exists a prime $p_i$ so that
$p_i\mid \Delta_{K_{i}}$ and 
$p_i\nmid \Delta_{K_{j}}$ for $0\leq j<i$.
Then, 
$$
K_0 \prod_{n\geq 1} K_n
$$
has (N).
\end{corollary}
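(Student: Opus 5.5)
The plan is to derive the statement from the general criterion \cite[Theorem 3]{WidmerPropN}. I recall that criterion in the following form; the exact normalisation of the exponent is from memory, and this is a point I would check against the source. Let $F_0\subsetneq F_1\subsetneq F_2\subsetneq\cdots$ be a tower of number fields with $F_0=K_0$. Suppose that
$$\inf_{F_{i-1}\subsetneq M\subseteq F_i}\Big(N_{F_{i-1}/\IQ}(D_{M/F_{i-1}})\Big)^{\frac{1}{[M:F_0][M:F_{i-1}]}}\longrightarrow\infty$$
as $i\to\infty$. Then $\bigcup_i F_i$ has (N). I would apply it to $F_i=K_0K_1\cdots K_i$, whose union is exactly $K_0\prod_{n\geq1}K_n$.

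\textbf{Step 1: the tower is strict and has no intermediate fields.} Every rational prime ramifying in $F_{i-1}$ divides some $\Delta_{K_j}$ with $j<i$, since a compositum only ramifies where its factors do. So $p_i$ is unramified in $F_{i-1}$. On the other hand $p_i$ ramifies in $K_i\subseteq F_i$, hence $F_i\neq F_{i-1}$. Moreover $[F_i:F_{i-1}]\leq[K_i:K_0]\leq 3$, because $F_i=F_{i-1}K_i$ and $K_0\subseteq F_{i-1}$. So the relative degree is $2$ or $3$, which is prime. Therefore the only admissible $M$ in the infimum is $M=F_i$.

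\textbf{Step 2: lower bound for the relative discriminant.} I need $N_{F_{i-1}/\IQ}(D_{F_i/F_{i-1}})$ to be at least roughly $p_i^{c[F_{i-1}:\IQ]}$ for some fixed $c>0$. The idea is local at $p=p_i$. Since $F_{i-1}/\IQ$ is unramified above $p$, the completions $F_{i-1,\mathfrak P}$ are unramified over $\IQ_p$. The ramification of $K_i$ at $p$ (ramification of $K_i$ over $\IQ$, forced through $K_0$ if necessary) cannot be absorbed by an unramified base change. Hence every prime $\mathfrak P$ of $F_{i-1}$ above $p$ should acquire a ramified prime in $F_i$, giving $\mathfrak P\mid D_{F_i/F_{i-1}}$. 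Summing over all $\mathfrak P\mid p$ then gives
$$N_{F_{i-1}/\IQ}(D_{F_i/F_{i-1}})\geq p_i^{[F_{i-1}:\IQ]/c'}$$
for an absolute constant $c'$. Concretely, I would argue that $\Oseen_{F_{i-1},\mathfrak P}\otimes\Oseen_{K_i}$ is étale-equivalent to base change along an unramified extension, so the discriminant is base-changed and its valuation is preserved up to the degree drop, which is bounded by $3$.

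\textbf{Step 3: conclude.} Plugging Step 2 into the criterion gives the quantity
$$p_i^{\frac{[F_{i-1}:\IQ]}{c'[F_i:K_0][F_i:F_{i-1}]}}\geq p_i^{\frac{1}{9c'[K_0:\IQ]}}.$$
The primes $p_i$ are pairwise distinct: $p_i\mid\Delta_{K_i}$ while $p_j\nmid\Delta_{K_i}$ for $j>i$. Hence $p_i\to\infty$, the criterion applies, and $K_0\prod_{n\geq1}K_n$ has (N).

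The main obstacle is Step 2. It is not enough that \emph{some} prime of $F_i$ above $p_i$ ramifies, because a single prime of norm about $p_i$ is killed by the exponent $1/[F_i:K_0]$. One needs ramification at essentially all primes of $F_{i-1}$ above $p_i$. This holds when $K_i$ is totally ramified at $p_i$, but for a partially ramified splitting such as $p_i=\mathfrak P_1^2\mathfrak P_2$ it needs the careful local-algebra argument above. I would first try to prove that unramified base change preserves the local discriminant of $K_i\otimes\IQ_p$, factor by factor. If this fails, I would instead look up the precise normalisation in \cite[Theorem 3]{WidmerPropN}, which may involve only the largest prime divisor rather than the full norm.
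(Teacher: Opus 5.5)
Reducing the statement to \cite[Theorem 3]{WidmerPropN} is the intended route. The paper gives no proof of its own and presents the statement as a special case of that criterion. Your Steps 1 and 3 are fine.

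The problem is in Step 2. The claim that \emph{every} prime $\mathfrak P$ of $F_{i-1}$ above $p_i$ ramifies in $F_i$ is false when $K_0\neq\IQ$. The hypothesis $p_i\mid\Delta_{K_i}$ only gives \emph{some} prime $\mathfrak p$ of $K_0$ above $p_i$ that ramifies in $K_i$. Primes of $F_{i-1}$ lying over the other primes of $K_0$ above $p_i$ can stay unramified. Concretely, take $K_0=\IQ(i)$, $K_1=K_0(\sqrt{2+i})$, $p_1=5$. Then $(2+i)$ ramifies in $K_1$, but modulo $(2-i)$ we have $2+i\equiv 4=2^2$, so the prime $(2-i)$ of $F_0=K_0$ splits in $F_1=K_1$. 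For the same reason, your constant $c'$ cannot be absolute: it must depend on $[K_0:\IQ]$. That dependence is harmless, since $K_0$ is fixed.

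Your ``concrete'' base-change argument gives the correct statement, once you read off its consequence properly.
\begin{itemize}
\item First, there is no degree drop. If $[F_i:F_{i-1}]<[K_i:K_0]$, then $F_{i-1}$ would contain a $K_0$-conjugate of $K_i$. That conjugate has the same absolute discriminant, so $p_i$ would ramify in $F_{i-1}$, a contradiction. Hence $F_i=F_{i-1}\otimes_{K_0}K_i$.
\item Since $F_{i-1}/K_0$ is unramified above $p_i$, \'etale base change gives $v_{\mathfrak P}(D_{F_i/F_{i-1}})=v_{\mathfrak p}(D_{K_i/K_0})$ for $\mathfrak P\mid\mathfrak p$. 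Summing, with $\sum_{\mathfrak P\mid\mathfrak p}f(\mathfrak P\mid\mathfrak p)=[F_{i-1}:K_0]$, you get
$$\ord_{p_i}N_{F_{i-1}/\IQ}(D_{F_i/F_{i-1}})=[F_{i-1}:K_0]\,\ord_{p_i}N_{K_0/\IQ}(D_{K_i/K_0})=[F_{i-1}:K_0]\,\ord_{p_i}\Delta_{K_i}\geq [F_{i-1}:K_0].$$
The middle equality uses $\Delta_{K_i}=\Delta_{K_0}^{[K_i:K_0]}N_{K_0/\IQ}(D_{K_i/K_0})$ and $p_i\nmid\Delta_{K_0}$.
\item Equivalently, take a prime of $F_i$ above both $\mathfrak P$ and a ramified prime of $K_i$ above $\mathfrak p$. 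Multiplicativity of ramification indices then shows that all primes of $F_{i-1}$ above \emph{one} ramified $\mathfrak p$ ramify in $F_i$. Their residue degrees over $p_i$ add up to at least $[F_{i-1}:K_0]$.
\end{itemize}
So $N_{F_{i-1}/\IQ}(D_{F_i/F_{i-1}})\geq p_i^{[F_{i-1}:\IQ]/[K_0:\IQ]}$, and the quantity in the criterion is at least $p_i^{1/[F_i:F_{i-1}]^2}\geq p_i^{1/9}\to\infty$. With this correction the proof is complete.
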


Before proving  Proposition \ref{prop:SL1} let us give a simple construction of a subfield of $\IQ^{(3)}$ with (N) and 
size $\geq 3/5$. To this end let $p>2$ be a prime and consider $D_p(x)=x^3+Ax+B$. We make the ansatz $A=-3n^2$ and $B=2n^3+2p$ and then choose
$n$ to be the even integer closest to $-(p/2)^{1/3}$. Thus, $D_p$ is 
irreducible over $\IQ$ 
and its discriminant 
$$ -4A^3-27 B^2 = -108 p(2 n^3+p) $$ 
is divisible by $p$ and of size $O(p^{5/3})$. In particular,
$p^2$ does not divide the discriminant of $D_p$ once $p>3$ because $108 = 2^2 3^3$.
Moreover, if $q<p$ are both primes,
then $p$ cannot divide the discriminant of $D_q$ since 
otherwise it would be at least $qp>q^2$. 
Setting $K_p=\IQ(\alpha_p)$, where $\alpha_p$
is any root of $D_p$, we conclude that
$p$ ramifies in $K_p$ but not in $K_q$ (for $q$ smaller than $p$) and  $|\Delta_{K_p}|=O(p^{5/3})$. By Corollary \ref{cor: cubic ext},
the composite field of all these cubic fields $K_p$ (one for each prime $p$)
yields a Property $(N)$ field $L$ with $\siz(L)\geq 3/5$. \\

Now let us prove Proposition \ref{prop:SL1}. First we show that for every odd prime $p$, there exists a cubic number field $F_p$ such that $p\mid \Delta_{F_p}$ and $|\Delta_{F_p}|\le 324p$. For $p=3$, let $F_3=\IQ(\alpha)$, where $\alpha$ is a root of $x^3-3x-1$. Then $\Delta_{F_3}=81$. Assume now $p\not=3$. We appeal to the following result.

\begin{theorem}[Nakagawa, {\cite[Theorem~0.4]{Nak}}]
    Let $-n$ be a discriminant of an imaginary quadratic field, $3\nmid n$. Then 
    \[\mathfrak{N}(-n) +\mathfrak{N}(-81n)=3\mathfrak{N}(3n)+1,\]
    where $\mathfrak{N}(d)$ denotes the number of cubic number fields with discriminant $d$.
\end{theorem}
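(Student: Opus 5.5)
Nakagawa's identity is quoted from \cite{Nak} and is not reproved in this paper. If I had to prove it, I would treat it as a refinement of Scholz's reflection theorem and argue through class field theory.

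The first step is to translate each count $\mathfrak{N}(\cdot)$ into a count of cyclic cubic extensions of a quadratic field. A non-cyclic cubic field $F$ of discriminant $D$ determines its quadratic resolvent $k=\IQ(\sqrt{D})$. Its Galois closure is a cyclic cubic extension of $k$, dihedral over $\IQ$, with conductor $f$, where $D=\disc(k)f^2$.
\begin{itemize}
\item[(i)] For $D=-n$ and $D=3n$ the discriminants are fundamental, so $f=1$ and the extensions are unramified. The standard Hasse count gives $\mathfrak{N}(D)=(3^{r_3(D)}-1)/2$, where $r_3(D)$ is the $3$-rank of the class group of $\IQ(\sqrt D)$.
\item[(ii)] For $D=-81n$ the resolvent is again $\IQ(\sqrt{-n})$, but now $f=9$. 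Here I would count cyclic cubic extensions of $\IQ(\sqrt{-n})$ of conductor exactly $9$ on which the nontrivial automorphism of $k$ acts by $-1$. This gives $\mathfrak{N}(-81n)=(3^{\rho}-3^{r_3(-n)})/2$, where $\rho$ is the $3$-rank of the minus part of the ray class group mod $9$ of $\IQ(\sqrt{-n})$, modulo cubes.
\end{itemize}
The hypothesis $3\nmid n$ ensures that $3$ is unramified in $\IQ(\sqrt{-n})$, so these local computations at $3$ stay clean.

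Substituting into the desired identity, the terms $3^{r_3(-n)}$ cancel. The statement becomes
\[
\frac{3^{\rho}-1}{2}=3\cdot\frac{3^{r_3(3n)}-1}{2}+1,
\qquad\text{equivalently}\qquad
\rho=r_3(3n)+1 .
\]
So the whole theorem reduces to a reflection statement comparing the ray class group mod $9$ of $\IQ(\sqrt{-n})$ with the class group of its mirror field $\IQ(\sqrt{3n})$.

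To prove $\rho=r_3(3n)+1$ I would pass to the biquadratic field $M=\IQ(\sqrt{-n},\sqrt{-3})$, which contains the cube roots of unity, and use Kummer theory. By Kummer theory, cyclic cubic extensions of $M$ of bounded conductor correspond to subgroups of $M^\times/M^{\times3}$ generated by elements that are locally cubes up to the prescribed level at the primes above $3$. Modulus $9$ corresponds to imposing no local condition beyond singularity, that is, beyond the principal ideal being a cube. Decomposing everything into eigenspaces under $\Gal(M/\IQ)$, the minus part for $\IQ(\sqrt{-n})$ is matched with the part of the Kummer group attached to $\IQ(\sqrt{3n})$. That part has rank $r_3(3n)+(\text{unit rank contribution})$, and the fundamental unit of $\IQ(\sqrt{3n})$ supplies exactly the extra $+1$. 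This is Leopoldt's Spiegelungssatz, made precise with the conductor $9$.

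The main obstacle is this last step: proving that the local conditions at $3$ match exactly, so that the reflection gives an equality rather than the usual inequality $r\le r'\le r+1$. Concretely, one needs that the singular elements together with the unit from $\IQ(\sqrt{3n})$ are never locally cubes above $3$ to an order that would lower the conductor below $9$. I would attack this by explicit local analysis in $\IQ_3(\sqrt{-3})$ and its unramified quadratic extension, treating the cases $n\equiv\pm1\bmod 3$ separately.
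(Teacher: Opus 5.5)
The paper gives no proof of this statement. It quotes the identity from Nakagawa, who obtains it from his proof of Ohno's conjecture on class numbers of binary cubic forms. Your class-field-theoretic route is genuinely different and self-contained. It also explains the shape of the identity: the $+1$ and the factor $3$ both come from the fundamental unit of $\IQ(\sqrt{3n})$.

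Your reduction is correct. $3n$ is again a fundamental discriminant, and discriminant $-81n$ forces resolvent $\IQ(\sqrt{-n})$ with $f=9$. The $\tau$-stability of the conductor rules out $\mathfrak p^2$ when $3$ splits, so the identity is indeed equivalent to $\rho=r_3(3n)+1$. Your Kummer paragraph then already proves this.

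Where you go astray is the last paragraph. There is no remaining obstacle at $3$, and the condition you say is needed is false in general.

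Because $3$ is unramified in $k=\IQ(\sqrt{-n})$, every cyclic cubic extension of $k$ unramified outside $3$ has conductor exponent $0$ or $2$ at each prime above $3$. There is no tame cubic ramification, since the residue fields have $3$ or $9$ elements, and $U^{(2)}=(U^{(1)})^3$. Hence $\rho$ counts \emph{all} minus-type cyclic cubic extensions of $k$ unramified outside $3$, the unramified ones included.

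After the tame base change to $M$, these have conductor exponent at most $3$ at each $\mathfrak P\mid 3$. That is exactly the condition $v_{\mathfrak P}(a)\equiv 0 \bmod 3$. So, as you say, the radical group is precisely $\mathrm{Sel}_3(M)_\chi$, with no finer congruence imposed. Here $\mathrm{Sel}_3(M)=\{a\in M^\times : (a)=\mathfrak a^3\}/M^{\times 3}$ and $\chi$ is the character of $\IQ(\sqrt{3n})$.

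Now take $\chi$-parts in $0\to \mathcal{O}_M^\times/\mathcal{O}_M^{\times 3}\to \mathrm{Sel}_3(M)\to \mathrm{Cl}(M)[3]\to 0$, which stays exact because $|\Gal(M/\IQ)|=4$. The unit part is spanned by the fundamental unit of $\IQ(\sqrt{3n})$, while $\zeta_3$ lies in the $\IQ(\sqrt{-3})$-part. The class group part is $\mathrm{Cl}(\IQ(\sqrt{3n}))[3]$. This gives $\rho=1+r_3(3n)$ exactly.

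Scholz's inequalities describe only the subspace of radicals that are local cubes modulo $\mathfrak P^3$, i.e. the unramified extensions. Those are precisely what cancels against $3^{r_3(-n)}$.

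Concretely, take $n=23$. Then $r_3(-23)=1$ and $r_3(69)=0$, so $\rho=1$, and the only extension counted is the Hilbert $3$-class field of $\IQ(\sqrt{-23})$. Its Kummer generator is the fundamental unit of $\IQ(\sqrt{69})$, which therefore \emph{is} a local cube modulo $\mathfrak P^3$ — exactly what you said must never happen. Yet $1+0=3\cdot 0+1$ holds. So drop the planned case-by-case local analysis and replace it by the two observations above; the argument is then complete.
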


In the theorem above, the right-hand side equals $3\mathfrak{N}(3n)+1\ge 1$, implying that either $\mathfrak{N}(-n)\ge 1$ or $\mathfrak{N}(-81n)\ge 1$, i.e., there exists a cubic number field with discriminant $-n$ or $-81n$. Taking an odd prime $p$, depending on whether $p\equiv 1$ or $3\pmod{4}$, we let $n=4p$ or $p$, respectively. These choices ensure, by Nakagawa's theorem, that there exists a cubic field $F_p$ with $\Delta_{F_p} \in \{-p,-4p,-81p,-324p\}$, so that $p\mid \Delta_{F_p}$ and $|\Delta_{F_p}|\le 324p$, where $324=2^2 3^4$.

Finally, each $\Delta_{F_p}$ is divisible only by $2,3$ and $p$, so $p\nmid \Delta_{F_q}$ for $q<p$ as soon as $p>3$. Hence we apply Corollary \ref{cor: cubic ext} to $K_0=\IQ$ and $K_i=F_{p_i}$. The compositum of all these fields $L$ has Property (N). $L$ contains a cubic field of discriminant at most $X$ for every prime $p\le X/324$, hence $N_L(X)\gg X\log X$ and $\siz(L)=1$, as was required to show.

\bibliographystyle{amsplain}
\bibliography{literature}

\end{document}